\begin{document}

\title{ON THE CONTINUITY OF LYAPUNOV EXPONENTS OF RANDOM WALKS IN RANDOM POTENTIALS %\thanks{Grants or other notes
%about the article that should go on the front page should be
%placed here. General acknowledgments should be placed at the end of the article.}
}

%\titlerunning{Short form of title}        % if too long for running head

\author{Thi Thu Hien LE        
}

%\authorrunning{Short form of author list} % if too long for running head

\institute{T.T.Hien LE \at
              Universit\'e de Brest, UMR CNRS 6205, 29238 Brest cedex, France \\
              \email{lethithuhiensp@gmail.com}                       
}

\date{Received: date / Accepted: date}
% The correct dates will be entered by the editor

\titlerunning{Continuity of Lyapunov exponents}
\maketitle

\begin{abstract}
We consider a simple random walk in an i.i.d. non-negative potential on the d-dimensional integer lattice $\mathbb{Z}^d$, $d \geq 3$. We study Lyapunov exponents, and present a probabilistic proof of its continuity when the potentials converge in distribution.
\keywords{random walk \and random potential \and Lyapunov exponents \and continuity}
% \PACS{PACS code1 \and PACS code2 \and more}
\subclass{60K37 \and 82B41}
\end{abstract}

\section{Introduction}
\label{intro}
Let $S_n, n \in \mathbb{N}$ be the simple random walk on $\mathbb{Z}^d$, $d \geq 3$. We denote by $P_x$ and $E_x$ the probability measure and the expectation, respectively, of the random walk starting from position $x$. Independently of the random walk, we give ourselves a family of non-negative random variables $V(x,\omega), x\in\mathbb{Z}^d$ that we call the potentials. We suppose that the potentials are independent and identically distributed, with distribution function $F$, defined on a probability space $(\Omega, \mathcal{F}, \mathbb{P})$ (and associated expectation $\mathbb{E}$).

We denote by $\mathcal{D}$ the set of distribution functions $F$ which assign zero measure to the half-line $]-\infty,0[$ and $F(0)<1$. And $\mathcal{D}_1$ denotes the subset of $\mathcal{D}$ which contains all distribution functions of finite mean. For $y\in\mathbb{Z}^d$, let us write $H(y)$ for the hitting time of the walk at site $y$:
\begin{equation}\label{equa 1.1}
H(y):=\inf\{ n\geq 0: S_n=y\},
\end{equation}
with the convention that $\inf\emptyset=+\infty$. Let $V$ be a potential with distribution function $F \in \mathcal{D}$. For any $x,y \in \mathbb{Z}^d$, $\omega \in \Omega$ we define:
\begin{equation}\label{equa 1.2}
 e(x,y,\omega):=E_{x}[\exp(-\sum_{m=0}^{H(y)-1}V(S_m,\omega)), H(y)<\infty] \qquad (e(x,y,\omega)=1 \mbox{ if } x=y)
\end{equation}
Consider a Markov chain on the extended state space $\mathbb{Z}^d\cup\{\bigtriangleup\}$ where $\bigtriangleup$ is an absorbing state. At each step, the walk jumps to $\bigtriangleup$ from $x$ with probability $1-e^{-V(x)}$. Otherwise, it behaves as a simple symmetric random walk on $\mathbb{Z}^d$. The path measure of this random walk starting at $x$ in a fixed potential $V(x,\omega)$ will be denoted by $\breve{P}_{x,\omega}$. One can think of $e(x,y,\omega)$ as the probability that the random walk reaches $y$ before being killed: $e(x,y,\omega)=\breve{P}_{x,\omega}(H(y)<\infty)$. Let us now introduce for $\omega\in\Omega$ and $x,y\in \mathbb{Z}^d$ the quenched path measure:
\begin{equation}\label{equa 1.5}
\hat{P}_{x,\omega}^{y}(\cdot):=\breve{P}_{x,\omega}(\cdot \mid H(y)<\infty),
\end{equation}
and the annealed path measure:
\begin{equation}\label{equab 1}
\mathbb{\hat{P}}_x^y(\cdot):=\breve{P}_{x}(\cdot \mid H(y)<\infty) \mbox{ where } \breve{P}_{x}(\cdot)=\mathbb{E}\breve{P}_{x,\omega}(\cdot).
\end{equation}
The expectation with respect to $\hat{P}_{x,\omega}^{y}$ and $\hat{\mathbb{P}}_{x}^{y}$ are denoted by $\hat{E}_{x,\omega}^{y}$ and $\hat{\mathbb{E}}_{x}^{y}$, respectively.

Let us define:
\begin{equation}\label{equa 1.3}
 a(x,y,\omega):=-\ln e(x, y, \omega)\in [0,\infty[,
\end{equation}
and
\begin{equation} \label{equab 2}
b(x,y):=-\ln \mathbb{E}(e(x,y,\omega)).
\end{equation}
When we want to emphasize the law of potential, we write $a(x,y,F,\omega)$, $b(x,y,F)$, $\hat{\mathbb{E}}_{x,F}^{y}$ instead of $a(x,y,\omega)$, $b(x,y)$, $\hat{\mathbb{E}}_ {x}^{y}$, respectively. The quantity $a(x, y,\omega)$ can be interpreted as the weighted average over all the paths from $x$
to $y$ of the random walk in the potential $V$. The following result is contained in \cite{Zer98}:

\textbf{Theorem A}. Let $F \in \mathcal{D}_1$. There is a non-random norm $\alpha_F(x)$ on $\mathbb{R}^d$, such that $\mathbb{P}$-a.s and in $L^1(\mathbb{P})$, for all $x\in \mathbb{Z}^d$:
\begin{equation}\label{equa 1.4}
\lim_{n\to \infty}\frac{1}{n}a(0,nx,\omega)=\lim_{n\to \infty}\frac{1}{n}\mathbb{E}[a(0,nx,\omega)]=\inf_{n \in \mathbb{N}}\frac{1}{n}\mathbb{E}[a(0,nx,\omega)]=\alpha_F(x).
\end{equation}
The norm $\alpha_{F}$ is called the quenched Lyapunov exponent. Moreover, $\alpha_F$ is monotone with respect to the potential: if $ F_1,F_2 \in \mathcal{D}_1 \mbox{ and } F_1\geq F_2$ (that is $F_1(t)\geq F_2(t)$ for all $t\in \mathbb{R}$), then $\alpha_{F_1}\leq \alpha_{F_2}$.

Flury \cite{Flury06} proved:

\textbf{Theorem B}. Let $F\in\mathcal{D}$. There is a non-random norm $\beta_F(x)$ on $\mathbb{R}^d$, such that for all $x\in \mathbb{Z}^d$:
\begin{equation}\label{equation 2}
\lim_{n\to \infty}\frac{1}{n}b(0,nx)=\inf_{n\in\mathbb{N}}\frac{1}{n}b(0,nx)=\beta_F(x).
\end{equation}
The norm $\beta_{F}$ is called the annealed Lyapunov exponent. $\beta_F$ is monotone with respect to the potential: that is  if $F_1 \geq F_2,$ then $\beta_{F_1}\leq \beta_{F_2}$. The norm $\beta_{F}$ inherits from $b(0,x)$ the following upper and lower bounds:
\begin{equation}\label{equation 3}
-\ln \int e^{-t}dF(t)\leq \frac{b(0,x)}{|x|}\leq \ln 2d - \ln \int e^{-t}dF(t).
\end{equation}
About the relation between these two Lyapunov exponents, we have by Jensen's equality: $\beta_F \leq \alpha_F$. Moreover, it was showed by Zygouras \cite{Zygr09} that for every $\lambda >0$ there is $\gamma^*(\lambda)>0$ such that for all $\gamma \in ]0,\gamma^*(\lambda)[$ : $\alpha_F=\beta_F$ (where $F$ is the distribution function of the potential $\lambda+\gamma V$).

Theorems A and B are analogous to the existence of the time constant in first passage percolation. The analogy between first passage percolation and Brownian motion in Poissonian potential was first described by Sznitman \cite{Sznit94}. In particular, he proved an analogue of the shape theorem of Cox and Durrett \cite{Durret}. Zerner showed some relations between quenched Lyapunov exponent of random walk in random potential and first passage percolation (see Proposition 9 in \cite{Zer98}). Recently, Sodin \cite{Sodin13} proved two theorems on concentration inequalities for random walk in random potential which are counterparts of Talagrand \cite{Talagrand95} and Benjamini-Kalai-Schramm \cite{Benja03}.

We want to study in this article the continuity of Lyapunov exponents with respect to the law of the potential. For random walk in random potential as a model of random polymers, when the potential is a function of an ergodic environment and steps of the walk, lemma 3.1 of \cite{Rass-Sep12} has showed the $L^p$ continuity ($p>d$) of the quenched point-to-point free energy with respect to the law of potential. For first-passage percolation, Cox\cite{Cox81} proved the continuity of the time constant with respect to the law of the passage time. Scholler\cite{Scholler11} also studied this question for a random coloring model which is a dependent first passage percolation model.

In our context, this problem is mentioned in section $11$ of Mourrat\cite{Mour12}. Here our proof is based on the argument of \cite{Cox81}. The most difficult part is to show that $\liminf \alpha_{F_n}\geq \alpha_F$ and $\liminf \beta_{F_n}\geq \beta_F$  if $F_n\stackrel{w}{\to}F$. It brings us to the questions of the ballisticity of the random walk under the conditional quenched and annealed path measure. By adapting the techniques used by Sznitman \cite{Sznit95} for the Brownian motion in a Poissonian potential we obtain a similar result for the quenched path measure. Note however that the boundedness condition on $W$ is not needed in the discrete case. And theorem 1.1 in \cite{KM12} showed that the velocity under annealed path measure of the walk that reaches $y$ is bounded above as $|y| \to \infty$. Independently of \cite{KM12}, \cite{Ioffe-Velenik} used a different method but it also implies the same conclusion (see Theorem C in \cite{Ioffe-Velenik}). But none of these two papers gives an explicit expression for the constant. When $d\ge 3$, by a simpler argument, we can control this constant in order to prove the continuity of the annealed Lyapunov exponent with respect to the law of potential (part (ii) of Theorem \ref{theorem 1.1}). We now state our main results.
\begin{theorem}\label{theorem 1.3}
Assume that $(F_n)$ is a sequence of distribution functions such that $F_n\in \mathcal{D}$ and there is a distribution function $G \in \mathcal{D}_1$, $G \leq F_n$ for all $n$. Furthermore, suppose that there exists a constant $\lambda>0$ satisfying $F_n(\lambda)=0$ for all $n$ if $d=2$. Then $\lim_{n\to\infty}\alpha_{F_n}(x)=\alpha_{F}(x) $ for all $x\in\mathbb{Z}^d$ if $F_n\stackrel{w}{\to}F$.
\end{theorem}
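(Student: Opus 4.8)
The plan is to prove the two inequalities $\limsup_n \alpha_{F_n}(x) \leq \alpha_F(x)$ and $\liminf_n \alpha_{F_n}(x) \geq \alpha_F(x)$ separately, following the strategy of Cox \cite{Cox81} for first-passage percolation.

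\textbf{The upper bound.} For $\limsup$, I would exploit the variational/infimum characterization in Theorem A, namely $\alpha_F(x) = \inf_n \tfrac{1}{n}\mathbb{E}[a(0,nx,\omega)]$. Fix $\varepsilon>0$ and choose $n$ with $\tfrac{1}{n}\mathbb{E}[a(0,nx,F,\omega)] \leq \alpha_F(x)+\varepsilon$. The quantity $\mathbb{E}[a(0,nx,F,\omega)] = \mathbb{E}[-\ln e(0,nx,\omega)]$ depends on $F$ only through finitely many... well, infinitely many potential values, but $e(0,nx,\omega)$ is an increasing function of the potential configuration, and I would argue that $\mathbb{E}_{F_n}[a(0,nx,\cdot)] \to \mathbb{E}_F[a(0,nx,\cdot)]$ as $F_n \xrightarrow{w} F$. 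This convergence is the technical heart of the upper bound: one couples the potentials so that $V_{F_n}(x) \to V_F(x)$ a.s. (Skorokhod), notes $e(0,nx,\cdot)$ is continuous and bounded in $[0,1]$ under this coupling, and uses the lower bound $G \leq F_n$ together with $G \in \mathcal{D}_1$ to get a uniform integrable lower bound $-\ln e \leq -\ln e_G$ controlling any contribution from paths where the potential is large (so that $a(0,nx,F_n,\cdot)$ is uniformly integrable, not just bounded in probability). Then $\limsup_n \alpha_{F_n}(x) \leq \limsup_n \tfrac1n \mathbb{E}_{F_n}[a(0,nx,\cdot)] = \tfrac1n\mathbb{E}_F[a(0,nx,\cdot)] \leq \alpha_F(x)+\varepsilon$.

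\textbf{The lower bound.} This is the hard direction, as the author explicitly warns. The infimum characterization goes the wrong way here, so one cannot simply fix $n$. Instead I would use ballisticity of the random walk under the conditional quenched path measure $\hat{P}_{0,\omega}^{nx}$: adapting Sznitman's estimates \cite{Sznit95}, one shows that under $\hat P_{0,\omega}^{nx}$ the walk reaches $nx$ in time $O(n)$ with overwhelming probability, i.e. $\hat P_{0,\omega}^{nx}(H(nx) > Kn) \to 0$ for suitable $K$, uniformly enough in the potential law (using $G \leq F_n$ and, when $d=2$, the uniform lower bound $F_n(\lambda)=0$ to prevent the killing rate from degenerating). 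Granting this, $a(0,nx,F_n,\omega) = -\ln \hat E_{0,\omega}^{nx}[\,\cdot\,] \geq$ (contribution restricted to $H(nx) \leq Kn$), and on that event the sum $\sum_{m=0}^{H(nx)-1} V_{F_n}(S_m,\omega)$ involves at most $Kn$ visited sites; comparing $V_{F_n}$ to $V_F$ under a Skorokhod coupling and absorbing the error over those $O(n)$ sites, one gets $a(0,nx,F_n,\omega) \geq a(0,nx,F,\omega) - o(n)$ after dividing by $n$ and passing to the limit via Theorem A applied to $F$. The role of $d \geq 3$ (transience) and the lower envelope $G$ is to make the ballisticity uniform.

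\textbf{Main obstacle.} The principal difficulty is establishing the ballisticity estimate uniformly over the sequence $(F_n)$: Sznitman-type arguments give, for a fixed potential law bounded below, that the conditioned walk is ballistic, but here the laws $F_n$ vary and could a priori degenerate (the killing rate $1-e^{-V}$ could become small). The hypotheses $G \leq F_n$ with $G \in \mathcal{D}_1$ (and the extra $F_n(\lambda)=0$ when $d=2$) are precisely what is needed to make the decay estimates uniform in $n$; carrying the quantitative constants through Sznitman's renormalization/concentration scheme while tracking their dependence only on $G$ (and $\lambda$) is the real work. Once ballisticity is uniform, the comparison of $a(0,nx,F_n,\cdot)$ with $a(0,nx,F,\cdot)$ on the ballistic event, combined with the weak convergence $F_n \xrightarrow{w} F$ and a Skorokhod coupling, yields $\liminf_n \alpha_{F_n}(x) \geq \alpha_F(x)$, completing the proof.
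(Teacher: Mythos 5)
Your upper bound essentially reproduces the paper's Theorem~\ref{theorem 4.1}: Skorokhod coupling via quantile functions $F_n^{-1}(\xi)$, dominated convergence (with $a(0,kx,G,\omega)$ as the integrable dominating function, which is why $G\in\mathcal D_1$ is needed), and the infimum characterization of $\alpha_F$ from Theorem~A. That part is sound, modulo a small sign slip ($G\leq F_n$ gives a \emph{larger} dominating potential $W\geq V_n$, hence a dominating \emph{upper} bound $a(0,kx,F_n,\cdot)\leq a(0,kx,G,\cdot)$).

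Your lower bound, however, departs from the paper's route and has a genuine gap. The paper never directly compares $a(0,nx,F_n,\omega)$ with $a(0,nx,F,\omega)$ on a ballistic event. It instead verifies the hypotheses of Cox's abstract Theorem~\ref{theorem 1.2}: a convolution estimate (Proposition~\ref{propo 3.2}) $\alpha_{F*G}(x)\leq\alpha_F(x)+c_1(F)f_1(F)\int t\,dG(t)\,|x|$ and a lower-truncation estimate (Corollary~\ref{corollary 3.1}) $|\alpha_{F^{t_0}}-\alpha_F|\leq c_2(F)f_2(t_0,F)$. Crucially, both are estimates for a \emph{single fixed} potential law $F$; the monotonicity of the constants $c_i$ and the continuity of the $f_i$ built into Theorem~\ref{theorem 1.2} do the bookkeeping over the sequence, so no uniform-in-$n$ ballisticity is ever required. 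Your plan needs exactly this uniformity, which you rightly flag as ``the real work'' but leave unaddressed. The deeper issue is that your comparison step does not close. Under the Skorokhod coupling (and in the hard case $F_n\geq F$, so $V_n\leq V$), you must control
$a(0,nx,F,\omega)-a(0,nx,F_n,\omega)=\ln\hat E^{nx}_{0,\omega}\bigl[\exp\bigl(\sum_{m<H(nx)}(V-V_n)(S_m)\bigr)\bigr]$.
Here $V-V_n\geq0$ is not independent of $V_n$ (both are functions of the same $\xi$), so the factorization-plus-Jensen trick of Proposition~\ref{propo 3.2} (which applies $-\ln E[e^{-Y}]\leq E[Y]$ to an \emph{independent} added potential $Y$) is unavailable; Jensen now runs the wrong way, $\ln E[e^X]\geq E[X]$, and the envelope $W\sim G$ only has a finite first moment, giving no exponential-moment control. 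Nor does pointwise $V_n(x)\to V(x)$ give anything uniform over the $\sim n$ sites the walk may visit, so ``absorbing the error over $O(n)$ sites'' is not justified. The paper's choice of structured perturbations (independent convolution; truncation where the potential difference is exactly $(t_0-V)\mathbf 1_{\{V<t_0\}}$) is precisely what makes the exponential-moment computation in \eqref{equa 3.16} go through. Finally, the theorem also covers $d=1$, which the paper treats separately via Zerner's identity $\alpha_F(m)=\mathbb E[a(0,m,F)]$; your sketch omits this case.
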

\begin{theorem}\label{theoremb 1}
Assume that $(F_n)$ is a sequence of distribution functions such that $F_n \in \mathcal{D}$, $F_n\stackrel{w}{\to}F$, $F\in \mathcal{D}$ and $F$ assigns probability $1$ to $[0,+\infty[$. Furthermore, suppose that there exists a constant $\lambda>0$ satisfying $F_n(\lambda)=0$ for all $n$ if $d=2$. Then $\lim_{n\to\infty}\beta_{F_n}(x)=\beta_{F}(x) $ for all $x\in\mathbb{Z}^d$.
\end{theorem}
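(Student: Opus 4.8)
The plan is to prove the two one-sided inequalities $\limsup_n\beta_{F_n}(x)\le\beta_F(x)$ and $\liminf_n\beta_{F_n}(x)\ge\beta_F(x)$ separately, following the scheme of \cite{Cox81}; the second is the hard half. For the first, by Theorem B one has $\beta_{F_n}(x)\le\frac1N b(0,Nx,F_n)$ for every fixed $N\in\mathbb N$. Conditioning the annealed survival probability on the walk path and using that the potentials are independent across sites, $e^{-b(0,Nx,F_n)}=E_0\big[\mathbf{1}_{\{H(Nx)<\infty\}}\prod_z\varphi_n(\ell_z)\big]$, where $\ell_z$ is the number of visits to $z$ before $H(Nx)$ and $\varphi_n(\ell):=\int e^{-\ell t}\,dF_n(t)$. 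Since $F_n\stackrel{w}{\to}F$ with $F$ a genuine law on $[0,\infty[$, one gets $\varphi_n(\ell)\to\varphi(\ell):=\int e^{-\ell t}\,dF(t)$ for each $\ell\ge 1$; each path with $H(Nx)<\infty$ contributes a finite product of factors in $[0,1]$, so bounded convergence under $P_0$ yields $b(0,Nx,F_n)\to b(0,Nx,F)$ (finite, because $\varphi(1)>0$ forces the direct path to carry positive annealed weight). Hence $\limsup_n\beta_{F_n}(x)\le\frac1N b(0,Nx,F)$ for all $N$, and Theorem B finishes this half upon $N\to\infty$.

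The obstruction to the lower bound is that $\beta_{F_n}(x)=\inf_m\frac1m b(0,mx,F_n)$, so the fixed-scale comparison above is useless: one needs $\frac1m b(0,mx,F_n)$ for large $m$ controlled uniformly in $n$. First I would reduce to uniformly bounded potentials: for $M$ a continuity point of $F$ let $F^M$, $F_n^M$ be the laws of $V\wedge M$; these lie in $\mathcal D$, still satisfy the $d=2$ hypothesis (take $M>\lambda$), obey $F_n^M\stackrel{w}{\to}F^M$, and since $F^M\ge F$ and $F_n^M\ge F_n$, Theorem B's monotonicity gives $\beta_{F_n^M}\le\beta_{F_n}$, while a monotone-approximation argument (a simpler instance of the same reasoning) gives $\beta_{F^M}(x)\uparrow\beta_F(x)$. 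So it suffices to prove $\liminf_n\beta_{F_n}(x)\ge\beta_F(x)$ when all potentials are supported in $[0,M]$; in that case \eqref{equation 3} together with $\varphi_n(1)\to\varphi(1)\in(0,1)$ bounds $\beta_{F_n}(x)$ and keeps $\varphi_n(1)$ away from $1$, uniformly in $n$.

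Now split the annealed partition function along the walk length $H(mx)$. For paths longer than $Km$ steps, using $\prod_z\varphi_n(\ell_z)\le\varphi_n(1)^{R_k}$ on a length-$k$ path ($R_k$ the number of distinct sites) together with, for $d\ge 3$, the large-deviation bound $P_0(R_k\le\rho k)\le e^{-c_\rho k}$ (and, for $d=2$, using instead $V\ge\lambda$, so a length-$k$ path contributes at most $e^{-\lambda k}$), one obtains $E_0\big[\mathbf{1}_{\{H(mx)<\infty,\,H(mx)>Km\}}\prod_z\varphi_n(\ell_z)\big]\le e^{-\tau(K)m}$ for $m$ large, uniformly in $n$, with $\tau(K)\to\infty$ as $K\to\infty$ — this is the explicitly controlled ballisticity constant alluded to in the introduction, obtained for $d\ge 3$ by this elementary range estimate in the spirit of \cite{Sznit95}; fix $K$ with $\tau(K)>\beta_F(x)$. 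For the remaining paths of length $\le Km$ the key is a comparison uniform in the visit count: $\varphi_n(\ell)\le(1+C\eta)\varphi(\ell)$ for all $\ell\ge1$ once $n\ge n_0(\eta)$, a threshold independent of the scale $m$ — for bounded $\ell$ this follows from equicontinuity of $\ell\mapsto e^{-\ell\,\cdot}$ on $[0,M]$ and $F_n\stackrel{w}{\to}F$, and for large $\ell$ both sides are governed by the mass near $0$, controlled through $\limsup_n\mathbb P_{F_n}(V\le\varepsilon)\le\mathbb P_F(V\le 2\varepsilon)$ (when $F$ has no atom at $0$ one first replaces $F$ by the slightly smaller potential with law $(1-\eta)F+\eta\delta_0$, which costs only $o(1)$ in $\beta$ as $\eta\to0$ since the potential is bounded). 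Since such a path visits $R\le Km$ sites, $\prod_z\varphi_n(\ell_z)\le e^{C\eta Km}\prod_z\varphi(\ell_z)$, so $E_0\big[\mathbf{1}_{\{H(mx)\le Km\}}\prod_z\varphi_n(\ell_z)\big]\le e^{C\eta Km}e^{-b(0,mx,F)}\le e^{-m(\beta_F(x)-C\eta K)}$. Adding the long-path bound, $e^{-b(0,mx,F_n)}\le 2e^{-m(\beta_F(x)-C\eta K)}$ for $m$ large (as $\tau(K)>\beta_F(x)$), so $\frac1m b(0,mx,F_n)\ge\beta_F(x)-C\eta K-\frac{\ln 2}{m}$; letting $m\to\infty$ with $n\ge n_0(\eta)$ fixed gives $\beta_{F_n}(x)\ge\beta_F(x)-C\eta K$, hence $\liminf_n\beta_{F_n}(x)\ge\beta_F(x)-C\eta K$, and finally $\eta\to0$ (with $K$ fixed) yields $\liminf_n\beta_{F_n}(x)\ge\beta_F(x)$.

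The main obstacle I expect is the combination of the uniform-in-visit-count comparison $\varphi_n(\ell)\le(1+C\eta)\varphi(\ell)$ with the uniform ballisticity/range estimate: together they convert what is really a comparison of large-deviation rates over $\asymp m$ sites into one governed by a \emph{scale-independent} threshold on $n$, which is exactly what allows the limits $m\to\infty$ and $n\to\infty$ to be taken in the right order — a naive estimate chasing $n$-dependent errors to zero fails here. The dimension enters only at this point: for $d\ge 3$ the range lower bound supplies the explicit constant $\tau(K)$, whereas for $d=2$ the hypothesis $F_n(\lambda)=0$ does the same job trivially.
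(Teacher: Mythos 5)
Your $\limsup$ half matches the paper's Theorem~\ref{theoremb 2} and is fine, and the truncation--above reduction is indeed the structure the paper uses. But your $\liminf$ argument for bounded potentials rests on an estimate that is false: you claim that for simple random walk on $\mathbb{Z}^d$, $d\ge 3$, the range satisfies $P_0(R_k\le\rho k)\le e^{-c_\rho k}$ for $\rho$ below the escape probability. The lower tail of the range is only stretched--exponential: by forcing the walk to stay inside a ball of radius $r\asymp (\rho k)^{1/d}$ (probability $\asymp e^{-ck/r^2}$, range $\le Cr^d\le\rho k$) one gets the Donsker--Varadhan lower bound $P_0(R_k\le\rho k)\ge e^{-c_\rho k^{1-2/d}}$, with $1-2/d<1$ for every finite $d$. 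Hence the ``long path'' contribution $E_0\big[\mathbf{1}_{\{Km<H(mx)<\infty\}}\prod_z\varphi_n(\ell_z)\big]$ does not decay at rate $\tau(K)m$ with $\tau(K)\to\infty$: after intersecting with $\{H(mx)<\infty\}$, the best you can extract from the range bound on this event is a rate of order $\|x\|_\infty\cdot(-\ln\varphi_n(1))$, which is independent of $K$ and is already dominated by $\beta_F(x)$ (compare \eqref{equation 3}). So you cannot choose $K$ with $\tau(K)>\beta_F(x)$, the two pieces of the split cannot be balanced, and the inequality $\beta_{F_n}(x)\ge\beta_F(x)-C\eta K$ is never reached. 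This is precisely the obstacle the paper circumvents: instead of a tail bound on $H(mx)$, it only uses the first-moment ballisticity $\hat{\mathbb{E}}_0^y(H(y))\le D|y|$ of Theorem~\ref{theorem 1.1}(ii), which is fed into the Cox-type abstract Theorem~\ref{theorem 1.2} through the Lipschitz estimates of Proposition~\ref{propo 3.2} and Corollary~\ref{corollary 3.1}, where only $\hat{\mathbb{E}}(H)$ (not its tail) enters.

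Two further points. First, the step $\beta_{F^M}(x)\uparrow\beta_F(x)$ as $M\to\infty$, which you dismiss as ``a simpler instance of the same reasoning,'' is not a corollary of the bounded case: the family $\{F^M\}_M$ is not uniformly bounded, and in the paper this is an independent result (Theorem~\ref{theoremb 3}) proved via the convolution decomposition $^{t_0}F*{}^{t_0}\hat F$ together with the ballisticity bound; you would have to supply an argument here. Second, when $F$ has no atom at $0$ you pass to $F'=(1-\eta)F+\eta\delta_0$ and assert $\beta_{F'}=\beta_F+o(1)$; since $F'\ge F$ and $F'\stackrel{w}{\to}F$, this is itself the hard ($\liminf$) direction of the continuity you are trying to prove, so as written it is circular. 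The high-level skeleton — monotonicity, two one-sided bounds, reduction by truncation, the annealed site-factorisation $\prod_z\varphi(\ell_z)$, and the role of ballisticity — is the right one, but the key quantitative input you chose (an exponential range lower-tail, hence exponential tails for $H$ under $\hat{\mathbb{P}}_0^{mx}$) is not available, and this is exactly why the paper routes the argument through expected hitting times and Cox's abstract theorem rather than through a direct path-length split.
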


\begin{remark}
The condition involving $G$ in the theorem \ref{theorem 1.3} ensures that all distribution functions have finite means. But in the case of annealed Lyapunov exponent, $\beta_{F}$ exists even when $\int tdF(t)=+\infty$. This is the reason that we don't require that the means are finite in Theorem \ref{theoremb 1}.
\end{remark}
\begin{remark}
The additional condition when $d=2$ in boths theorems above means that all potentials are bounded below. It is useful for our technique. But we do not think that this hypothesis is necessary for the continuity of Lyapunov exponents.  
\end{remark}
The proof of Theorem \ref{theorem 1.3} is divided in two parts. The first step is to prove that $\limsup\alpha_{F_n}(x)\leq \alpha_{F}(x)$ and then the proof of $\liminf\alpha_{F_n}(x)\geq \alpha_{F}(x)$. It is surprising that the proof of $\limsup$ is relatively easy, while the proof of $\liminf$ is more difficult. We state below in great generality the main results of Cox\cite{Cox81}. It combines the results of Proposition 4.4, lemma 4.7 and proof of theorem 1.14 in \cite{Cox81}. It gives a sufficient condition for $\liminf \alpha_{F_n}(x)\geq \alpha_F(x)$ to hold.

\begin{theorem}[An abstract theorem on the continuity of the time constant]\label{theorem 1.2}
Let $\mu$: $\mathcal{D}_1\longrightarrow \mathbb{R}^+, F\mapsto \mu(F)$ be a map that satisfies the following three conditions:
\begin{itemize}
\item[\rm{(i)}]$\mu(F)\leq \mu(G)$ if $F\geq G$.
\item[\rm{(ii)}]For all $F \in \mathcal{D}_1$, there exists $c_1(F)>0$ and $f_1(F)>0$ such that:
\begin{itemize}
\item[\rm{(1)}] $c_1(F)\leq c_1(G)$ if $F\geq G$,
\item[\rm{(2)}] $\lim_{n\to\infty}f_1(F_n)=f_1(F)$ for all $F_n \in \mathbb{F} $ such that $F_n\stackrel{w}{\to}F$,
\item[\rm{(3)}] $\mu(F*G)\leq \mu(F)+c_1(F)f_1(F)\int tdG(t)$ for all $G \in \mathbb{F}$.
\end{itemize} 
\item[\rm{(iii)}]For all $F\in\mathcal{D}_1$, $t_0>0$ there exists $c_2(F)>0$ and $f_2(t_0,F)$ such that  :
\begin{itemize}
\item[\rm{(1)}] $c_2(F)\leq c_2(G)$ if $F\geq G$,
\item[\rm{(2)}] $\lim_{n\to\infty}f_2(t_0,F_n)=0$ for all $F_n \in \mathbb{F} $ such that $\lim_{n \to \infty}F_n(t_0-)=0$,
\item[\rm{(3)}] $|\mu(F^{t_0})-\mu(F)|\leq c_2(F)f_2(t_0,F)$ where:
\begin{equation}\label{equa 1.10}
F^{t_0}(t):=
\begin{cases}
0 \mbox{ if $t<t_0$}\\
F(t) \mbox{ if $t\geq t_0$}
\end{cases}
\end{equation}
\end{itemize}
\end{itemize}
Then $\liminf_{n \to \infty}\mu(F_n)\geq \mu(F)$ if $F_n\stackrel{w}{\to}F$.
\end{theorem}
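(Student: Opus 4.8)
The plan is to prove the single inequality $\mu(F)\le\liminf_{n\to\infty}\mu(F_n)$, and for this it suffices to show, for every $\varepsilon>0$, that $\mu(F)\le\liminf_n\mu(F_n)+\varepsilon$. The mechanism is a perturbation argument: since $F_n$ and $F$ are merely weakly close and neither stochastically dominates the other, I would, for each large $n$, manufacture an auxiliary law $G_n$ with $\int t\,dG_n(t)$ arbitrarily small and such that $F_n*G_n$ dominates (in the distribution-function order) a suitable truncation of $F$. Hypothesis (i) then turns this domination into a comparison of the values of $\mu$, hypothesis (ii)(3) pays for the perturbation with the small term $c_1(\cdot)f_1(\cdot)\int t\,dG_n$, and hypothesis (iii) absorbs the error made in truncating.

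The first step is a double truncation, needed only to keep the auxiliary constants under control. Fix continuity points $0<t_0<T$ of $F$. Replace $F$ by its lower truncation $F^{t_0}$; by (i), $\mu(F)\le\mu(F^{t_0})$, so it is enough to bound $\mu(F^{t_0})$ from above. Replace $F_n$ by $F_n^{[t_0,T]}$, the law of $(X_n\vee t_0)\wedge T$ for $X_n\sim F_n$: this law is supported in $[t_0,T]$, hence dominates the point mass $\delta_T$ in distribution-function order, so by the monotonicity clauses (ii)(1) and (iii)(1) one has $c_1(F_n^{[t_0,T]})\le c_1(\delta_T)<\infty$ and $c_2(F_n^{[t_0,T]})\le c_2(\delta_T)<\infty$ uniformly in $n$. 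Finally, the passage from $\mu(F_n)$ to $\mu(F_n^{[t_0,T]})$ splits into a harmless capping at $T$ (which only decreases $\mu$, by (i)) followed by a lower truncation at $t_0$, whose cost is at most $c_2(\delta_T)\,f_2(t_0,\cdot)$ by (iii)(3); here (iii)(2) is what makes this cost tend to $0$ once $t_0$ is small, because along the sequence the relevant values $F_n(t_0-)$ converge to $F(t_0-)$, which in turn tends to $0$ as $t_0\downarrow0$ (in the concrete applications $f_2(t_0,\cdot)$ depends on the distribution only through its value at $t_0-$, so this double limit is straightforward).

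The heart of the argument is the construction of $G_n$. Since $F_n^{[t_0,T]}\stackrel{w}{\to}$ the law of $(X\vee t_0)\wedge T$, one has good pointwise control $F_n^{[t_0,T]}(t)\le F^{t_0}(t)+\rho_n$ with $\rho_n\to0$ (a L\'evy-type estimate). The only genuine obstruction is the residual additive $\rho_n$: near an atom, or on a flat stretch, of $F$ weak convergence gives no uniform control, so a uniform small deterministic shift cannot remove it. I would therefore take $G_n$ to be the law of a two-scale shift — a small displacement of order $\rho_n$ carried with probability close to $1$, together with a \emph{rare} large displacement carried with probability of order $\rho_n$ and of size $T$ (or distributed like the top $\rho_n$-quantile of $F^{t_0}$ when $F$ is unbounded), whose role is to push the excess mass above the part of $F^{t_0}$ it must cover. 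One checks that with this choice $F_n^{[t_0,T]}*G_n\le F^{t_0}$ pointwise, while $\int t\,dG_n(t)$ tends, as $n\to\infty$, to a quantity bounded by $\mathbb E[(X-T)^+]$ with $X\sim F$ — the only contribution that does not vanish for fixed $t_0,T$ being the mass of $F^{t_0}$ lying above the cap $T$.

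Assembling, $\mu(F)\le\mu(F^{t_0})\le\mu(F_n^{[t_0,T]}*G_n)\le\mu(F_n^{[t_0,T]})+c_1(\delta_T)f_1(F_n^{[t_0,T]})\int t\,dG_n$, and by the previous paragraph $\mu(F_n^{[t_0,T]})\le\mu(F_n)+c_2(\delta_T)f_2(t_0,\cdot)$. I would take $\liminf_n$ (using (ii)(2) for the convergence of $f_1(F_n^{[t_0,T]})$ and the uniform bounds above for the remaining factors), then let $t_0\downarrow0$ (killing the $f_2$-term via (iii)(2) together with the vanishing part of $\int t\,dG_n$), and finally let $T\uparrow\infty$, so that $\mathbb E[(X-T)^+]\to0$ because $F\in\mathcal D_1$ has finite mean. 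I expect two steps to be the real work. The first is the two-scale construction of $G_n$: the convolution structure forced by (ii)(3) makes removing the additive L\'evy error genuinely delicate, and this is precisely where the possible atoms and flat stretches of $F$ must be handled. The second is the bookkeeping of the constants through the three nested limits — in particular one must be sure the prefactor multiplying $\mathbb E[(X-T)^+]$ stays bounded as $T\uparrow\infty$, which is exactly why the hypotheses impose the monotonicity (ii)(1), (iii)(1) of $c_1,c_2$ and the weak-continuity (ii)(2) of $f_1$ (in the concrete applications $c_1$ is moreover bounded, e.g. by a speed-type constant, so this last limit is unproblematic).
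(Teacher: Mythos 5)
The paper does not give its own proof of this theorem: it is stated as a packaging of Proposition~4.4, Lemma~4.7 and the proof of Theorem~1.14 of Cox~\cite{Cox81}, and then invoked as a black box. So there is no line-by-line comparison to make; I can only assess your argument on its own terms, and against what Cox's argument is known to do (quantile coupling, truncation, and convolution by a small auxiliary law to compensate for atoms and flat stretches). Your high-level strategy is in that same spirit, but the writeup has genuine gaps.

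First, the crux of the theorem is exactly the step you defer: producing, for $F_n\ge F$ with $F_n\stackrel{w}{\to}F$, a law $G_n$ with small mean such that a suitable modification of $F_n$ convolved with $G_n$ dominates a truncation of $F$ in the distribution-function order. You correctly diagnose why a single deterministic shift fails (flat stretches and atoms of $F$ make the quantile gap non-uniform), and the two-scale idea you sketch — a vanishing shift plus a rare large jump — is the right kind of device. But you do not actually construct $G_n$ nor verify the pointwise inequality $F_n^{[t_0,T]}*G_n\le F^{t_0}$, and this verification is delicate: stochastic domination does not in general upgrade to ``domination after adding an independent non-negative variable of matching mean,'' so the mass and size of the rare jump must be engineered from the Lévy distance and from the structure of $F$, not just asserted. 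As you say yourself, this is where ``the real work'' is; in a proof it cannot be left as a plan.

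Second, the bookkeeping via $\delta_T$ is wrong in a way that would break the final limit. You bound $c_1(F_n^{[t_0,T]})\le c_1(\delta_T)$ and $c_2(\cdot)\le c_2(\delta_T)$, but there is no reason for $c_1(\delta_T)$ to stay bounded as $T\uparrow\infty$ (in the paper's own application $c_1(F)=D(\ln 2d+\int t\,dF)$, so $c_1(\delta_T)\sim DT$), and you need a bound that survives $T\to\infty$ to kill the $\mathbb E[(X-T)^+]$ term. The bound you actually want comes from the other direction of the order: since $F_n\ge F$ one has $F_n^{[t_0,T]}\ge F^{t_0}$ and $^{T}F_n\ge F$, hence $c_1(F_n^{[t_0,T]})\le c_1(F^{t_0})$ and $c_2(^{T}F_n)\le c_2(F)$ by (ii)(1) and (iii)(1), which are uniform in $n$ and $T$ (and in $t_0$ once you fix an upper bound $t_0\le t_0^*$).

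Third, and this you flag yourself, the passage ``let $t_0\downarrow 0$ to kill $f_2$'' uses more than hypothesis (iii)(2) literally grants. (iii)(2) controls $\lim_n f_2(t_0,F_n)$ only along sequences with $F_n(t_0-)\to0$; in general $F_n(t_0-)\to F(t_0-)>0$, so (iii)(2) gives nothing for fixed $t_0$, and the double limit $\lim_{t_0\downarrow0}\limsup_n f_2(t_0,F_n)=0$ is an extra property you are assuming from the concrete applications, not deriving from the abstract hypotheses as stated. A clean abstract proof either strengthens (iii)(2) to that double-limit form or reorganizes so that (iii) is applied only to a truncation of $F_n$ at a level $t_0$ chosen, for each $n$, small enough that $F_n(t_0-)$ is small; in either case the issue needs to be resolved explicitly rather than deferred to ``the concrete applications.''
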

Condition $(i)$ is the monotonicity property of Lyapunov exponent refered to in theorem A. It is a key tool for proving our continuity results. Because of this property, when dealing with $F_n\stackrel{w}{\to}F$, it suffices to consider only two cases: $F_n\leq F$ for all $n$ and $F_n\geq F$ for all $n$. To see this, define $\underline{F}_n(t)=\min\{F_n(t), F(t)\}$ and $\overline{F}_n(t)=\max\{F_n(t), F(t)\}$, so that $\underline{F}_n\leq F_n(t)\leq \overline{F}_n(t)$. Then $\alpha_{\overline{F}_n}\leq \alpha_{F_n}\leq \alpha_{\underline{F}_n}$, $\alpha_{\overline{F}_n}\leq \alpha_{F}\leq \alpha_{\underline{F}_n}$ and both $\underline{F}_n, \overline{F}_n \stackrel{w}{\to}F$ whenever $F_n\stackrel{w}{\to}F$.

Condition $(ii)$ will be proved in Proposition \ref{propo 3.2} and Condition $(iii)$ will follow from Corollary \ref{corollary 3.1}.

As in \cite{Cox-Kes81}, the proof of Theorem \ref{theoremb 1} is done in two steps. First, we will show the continuity of $\beta_{F}$ under the same hypothesis of theorem \ref{theorem 1.3} and the proof of it is totally similar to the case $\alpha_{F}$. Next, to eliminate the condition of finite mean, with $t_0>0$ arbitrary, in the theorem \ref{theoremb 3} of this paper, we prove that $\beta_{^{t_0}F} \to \beta_{F}$ when $t_0 \to \infty$, where $^{t_0}F$ is the distribution function obtained by truncating below at $t_0$ (see (\ref{equa 4.3}) for the definiton of $^{t_0}F$).

For $y=(y_1,y_2,...,y_d)\in \mathbb{Z}^d $, $|y|$ denotes the $\ell_1$-norm of $y$: $|y|=|y_1|+|y_2|+\cdots |y_d|$ while $\|y\|$ designates the $\ell_{\infty}$ of $y$: $\|y\|=\max_{1\leq i\leq d}|y_i|$. And $|A|$ is the cardinality of the set $A$.
\begin{theorem}\label{theorem 1.1}
Let $d\geq 3$ and $V$ be a potential with distribution function $F$,
\begin{itemize}
\item[\rm{(i)}]If $F\in\mathcal{D}_1$, there is a set $\bar{\Omega}$ of full $\mathbb{P}$ probability and a constant $\kappa(d, F)\in ]0, \infty[$ such that for all $\omega\in \bar{\Omega}$, 
\begin{equation}\label{equa 1.6}
\limsup_{|y|\to\infty}\frac{\hat{E}_{0,\omega}^{y}(H(y))}{|y|} <\kappa.
\end{equation}
\item[\rm{(ii)}]If $F\in\mathcal{D}$, there exists a constant $D(d)$ such that for all $y\in \mathbb{Z}^d, y\neq 0$:
\begin{equation}
\frac{\hat{\mathbb{E}}_{0}^{y} (H(y))}{|y|}\leq D(d) \frac{1}{-\ln \int \exp(-t)dF(t)}\big(\ln 2d -\ln \int \exp(-t)dF(t)\big).
\end{equation}
\end{itemize} 
\end{theorem}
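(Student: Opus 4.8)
The plan is to handle both parts by the same two-stage scheme: first trade the time $H(y)$ for the number of distinct sites visited before $H(y)$ (this is where $d\ge 3$ is essential), and then bound the expected range of the conditioned walk by a constant multiple of $|y|$.

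\emph{Stage 1 (reduction to the range).} Let $G=\sum_{n\ge 0}P_0(S_n=0)<\infty$ be the Green function of the simple random walk, finite since $d\ge 3$, and let $R_{H(y)}$ denote the number of distinct sites visited strictly before $H(y)$ on $\{H(y)<\infty\}$. Fix $\omega$ and a site $z$. Decomposing the $\breve P_{0,\omega}$-walk at its successive visits to $z$ and applying the strong Markov property gives $\breve E_{0,\omega}[\#\{m<H(y):S_m=z\};H(y)<\infty]\le G\,\breve P_{0,\omega}(H(z)<H(y))\,e(z,y,\omega)=G\,\breve P_{0,\omega}(H(z)<H(y)<\infty)$, because the expected number of visits to $z$ started from $z$, with both the potential-killing and the absorption at $y$ in force, is at most $G$. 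Summing over $z$ yields $\breve E_{0,\omega}[H(y);H(y)<\infty]\le G\,\breve E_{0,\omega}[R_{H(y)};H(y)<\infty]$, hence $\hat E_{0,\omega}^{y}(H(y))\le G\,\hat E_{0,\omega}^{y}(R_{H(y)})$ after dividing by $e(0,y,\omega)$, and likewise $\hat{\mathbb E}_{0}^{y}(H(y))\le G\,\hat{\mathbb E}_{0}^{y}(R_{H(y)})$ after first taking $\mathbb E$ and then dividing by $\mathbb E[e(0,y,\omega)]$. In $d=2$ the walk is recurrent, $G=\infty$, and this step fails — which is precisely why the theorems impose the extra lower-bound condition on the potential in that case.

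\emph{Stage 2, annealed (part (ii)).} I would write $\hat{\mathbb E}_0^{y}(R_{H(y)})=\sum_z\hat{\mathbb P}_0^{y}(H(z)<H(y))$ and bound $\hat{\mathbb P}_0^{y}(H(z)<H(y))\le \mathbb E[e(0,z,\omega)e(z,y,\omega)]/\mathbb E[e(0,y,\omega)]$ using $\breve P_{0,\omega}(H(z)<H(y))\le e(0,z,\omega)$ and the strong Markov property. The crucial point is that one must \emph{not} estimate the numerator by $(\int e^{-t}dF)^{\#\mathrm{range}}$: divided by $\mathbb E[e(0,y,\omega)]$, which can be as small as $(2d)^{-|y|}(\int e^{-t}dF)^{|y|}$, that overshoots by a factor $(2d)^{|y|}$. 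Instead one concatenates the two annealed path integrals into a single sum over walks $0\to z\to y$ and compares it term by term with the path integral defining $\mathbb E[e(0,y,\omega)]$: a visit to a ``detour'' site $z$ (one with $|z|+|z-y|>|y|$) occurs along an excursion that can be pruned in a finite-to-one way, each pruning neither lengthening the path nor raising any local time — hence not lowering the weight — with the re-insertions summable because each extra excursion step costs a factor $(2d)^{-2}<1$; this makes $\sum_{z:\,|z|+|z-y|>|y|}$ contribute $O(|y|)$. The remaining $z$ lie on $\ell_1$-geodesics from $0$ to $y$, of which there are only $O(|y|)$, and a finer level-by-level count controls those. Feeding this back through the two-sided bounds (\ref{equation 3}) for $b$ produces the stated constant $D(d)\,\dfrac{\ln 2d-\ln\int e^{-t}dF}{-\ln\int e^{-t}dF}$.

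\emph{Stage 2, quenched (part (i)), and assembling $\bar\Omega$.} Here $\hat P_{0,\omega}^{y}(H(z)<H(y))\le e(0,z,\omega)e(z,y,\omega)/e(0,y,\omega)$. Using Theorem A and subadditivity, on a set of full $\mathbb P$-measure one has $a(0,z,\omega)=\alpha_F(z)+o(|z|)$ uniformly in $z$, so I would split $\sum_z$ into: the regime where $|z|+|z-y|$ exceeds a large fixed multiple of $|y|$, where the triangle excess $\alpha_F(z)+\alpha_F(z-y)-\alpha_F(y)$ grows linearly (because $\alpha_F$ is a genuine norm and $V\ge 0$, so $a(0,\cdot,\omega)\ge c\,|\cdot|$ with $c>0$), giving exponential decay and an $O(1)$ contribution; and the ``bulk'' regime near the segment $[0,y]$. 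For the bulk one adapts Sznitman's argument for Brownian motion in a Poissonian potential \cite{Sznit95}: decompose the conditioned path at the successive first entrances into the half-spaces $\{x\cdot\hat y\ge k\}$, $k=0,1,\dots,|y|$, and show the sojourn times between consecutive entrances have exponential tails, uniformly in $k$ and in $|y|$ — a long sojourn forces the transient walk to visit many fresh sites and therefore pay the potential, which is suppressed under the conditioned measure — so that their sum is $O(|y|)$; the boundedness assumption on the potential in \cite{Sznit95} is not needed here. Finally a Borel--Cantelli argument over $y$ in a countable set dense in all directions, together with the monotonicity of Theorem A and the norm structure of $\alpha_F$, upgrades the estimate to a single full-measure $\bar\Omega$ on which (\ref{equa 1.6}) holds.

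\emph{Main obstacle.} The hard part in both cases is the bulk estimate: showing the conditioned walk does not loiter within distance $O(|y|)$ of the optimal path. There the submultiplicativity of $e$ (equivalently subadditivity of $a$ and $b$) is too lossy to yield any decay in $z$, so one genuinely needs the path-surgery and level-counting in the annealed case — made tractable precisely because $d\ge 3$ keeps $G$ finite and excursions local — and Sznitman's crossing-time renewal estimate in the quenched case. By contrast the reduction of Stage 1, the far-regime bound, and the passage from a $|y|$-indexed estimate to an almost-sure statement are comparatively routine.
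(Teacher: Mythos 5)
Your Stage 1 is correct and is exactly what the paper does: the Green-function bound $\hat E_{z',\omega}^{y}\big(\sum_{m<H(y)}\mathbf{1}_{\{S_m=z'\}}\big)\le D(d)=\sum_{k\ge 0}P_{z'}(S_k=z')$ is the paper's (\ref{equab 4}), and it reduces both parts to controlling $\hat E(|\mathcal A_2(0,y)|)$. But in Stage 2 you sidestep the two estimates that actually make the proof work, and your substitutes contain a genuine gap.

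For part (ii), the ``overshoot'' you warn against is a red herring. The paper does not crudely bound $\mathbb E[\text{numerator}]$ by $(\int e^{-t}dF)^{\#\mathrm{range}}$ and divide. Following Zerner's Lemma~3, it tilts exponentially at the rate $d_1=-\ln\int e^{-t}dF(t)$, so that $\mathbb E\,e^{d_1-V(s)}=1$, and then Jensen plus $\ell_s\ge 1$ on $\mathcal A_2$, Fubini and independence collapse the annealed numerator $\mathbb E E_0\big[\exp(d_1|\mathcal A_2|-\sum_s V(s)\ell_s);H(y)<\infty\big]$ to $P_0(H(y)<\infty)\le 1$, giving $\hat{\mathbb E}_0^{y}(|\mathcal A_2|)\le b(0,y)/d_1$ in two lines, and then (\ref{equation 3}) finishes. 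No surgery needed. Your proposed alternative is not only unnecessary but flawed: you assert the sites with $|z|+|z-y|=|y|$ number $O(|y|)$, but this $\ell_1$-diamond contains $\Theta(\prod_i|y_i|)$ lattice points --- of order $|y|^d$ in generic directions --- so the ``finer level-by-level count'' is carrying essentially the whole burden of the theorem and is not given. For part (i), your far/bulk split (triangle-excess decay plus half-space crossing-time renewal) is a different route from the paper's cube coarse-graining with the $H_1+H_2$ split and the exponential moment for the visited lattice animal (Lemmas~\ref{lemme 2.1}--\ref{lemme 2.3}). Two problems: first, the $o(|z|)$ shape-theorem errors are not uniform enough to yield exponential decay for all detour $z$ without additional input; second, and more seriously, your bulk argument bounds sojourn \emph{times} rather than the visited \emph{range}, thereby discarding your own Stage~1 reduction, and the claimed uniform exponential tail for the crossing times is asserted, not proved. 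The paper's key mechanism --- that along any long lattice animal of cubes, at least half are occupied, which forces $\hat E_{0,\omega}^y[\exp(C_2|\mathcal A_1|)]$ to be controlled by $1/e(0,y,\omega)$ --- is the substance you would need to replace, and you have not.
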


\section{Proof of theorem \ref{theorem 1.1}}
\textbf{Proof of part (i) of the theorem \ref{theorem 1.1}.}

Brownian motion in a Poissonian potential under the quenched law was treated in \cite{Sznit95}. We show how to adapt these arguments to prove part (i) of theorem \ref{theorem 1.1}.\\
Let $d\geq 3$ and $y\in \mathbb{Z}^d$ be the ''target point''. For $A\subset \mathbb{Z}^d$, $H(A)$ denotes the entrance time of $S_n$ in $A$: $H(A):=\inf\{n\geq 0: S_n \in A\}$. By convention $\inf \emptyset=+\infty$.\\
Choose some $\delta=\delta(F) >0$ such that $\rho=\mathbb{P}(V(0)\geq \delta)>0$. We fix a large even integer $l=l(d,F)$. We will explain how to choose $l$ after equation (\ref{equa 2.17}).\\ 
Let us now introduce a partition of $\mathbb{Z}^d$, namely $\{C(q), q \in \mathbb{Z}^d\}$ where $C(q)$ is the cube of side length $l$ and center $lq$:
\begin{equation}\label{equa 2.1}
C(q)=(lq+[-l/2,l/2)^d)\cap \mathbb{Z}^d.
\end{equation}
Let us recall the definitions of an ''occupied set'' and an ''empty set'' used in \cite{KM12}. Given an environment $\omega\in\Omega$ and a target point $y \in \mathbb{Z}^d$, we say that a set $A\subset \mathbb{Z}^d$ is occupied if there exists $z\in A, z\neq y$ such that $V(z,\omega)\geq \delta$. We say that $A$ is empty otherwise.

Define:
\begin{equation}\label{equa 2.2}
\mathcal{C}_1=\{ q \in \mathbb{Z}^d: C(q)\mbox{ is occupied } \}.
\end{equation}
%\begin{equation} \label{equa 2.2}
%\mathcal{C}_3=\{q \in \mathbb{Z}^d: ||\acute{q}-q||\leq 3 \mbox{ with } \acute{q}\in \mathbb{Z}^d \mbox{ and } y\in C(\acute{q})\}.
%\end{equation}
\begin{equation} \label{equa 2.3}
\mathcal{C}_2=\{ q\in \mathbb{Z}^d: C(q) \mbox { is empty}\}.
\end{equation}
Accordingly, we define:
\begin{equation}\label{equa 2.11}
H_i:=\sum_{q\in\mathcal{C}_i}\sum_{m=0}^{H(y)-1}\mathbf{1}_{\{S_m \in C(q)\}} \hspace{0.1cm},\hspace{0.5cm} i=1,2.
\end{equation}
That is, $H_i$ is the time spent by process in boxes indexed by the class $\mathcal{C}_i$ until it reaches $y$.

Now we shall estimate $H_1$ by the following lemma, which is totally like lemma $2.1$ in \cite{KM12}.
\begin{lemma}\label{lemme 2.1}
There exists a constant $C_1=C_1(d,F)$ and a set $\Omega_1$ of full $\mathbb{P}$-measure such that for all $\omega \in \Omega_1$ and for all $y\in \mathbb{Z}^d\backslash \{0\}$,
$$\hat{E}_{0,\omega}^{y}(H_1)\leq C_1|y|. $$
\end{lemma}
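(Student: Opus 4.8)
The plan is to bound the time spent in occupied boxes by relating it to a sum over occupied sites weighted by the expected number of visits, and then to exploit the fact that every occupied box contains a site of potential $\geq\delta$, which the killed walk survives only with probability $\leq e^{-\delta}<1$. Concretely, I would first write $\hat E_{0,\omega}^y(H_1)=\breve E_{0,\omega}(H_1, H(y)<\infty)/e(0,y,\omega)$, so it suffices to bound the numerator by $C_1|y|\,e(0,y,\omega)$, and in fact it is cleaner to bound $\breve E_{0,\omega}(H_1)$ itself (dropping the event $\{H(y)<\infty\}$ only costs us, so we actually need the conditioning — see the last paragraph). For the unconditioned quantity, observe that $H_1=\sum_{q\in\mathcal C_1}N(C(q))$ where $N(C(q))=\sum_{m=0}^{H(y)-1}\mathbf 1_{\{S_m\in C(q)\}}$ is the occupation time of the box before hitting $y$, and the number of boxes is controlled because each occupied box $C(q)$ has diameter $\lesssim l$.

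The key step is a Green's-function-type estimate for the killed walk. For a fixed box $C(q)$ with $q\in\mathcal C_1$, pick a site $z=z(q)\in C(q)$, $z\neq y$, with $V(z,\omega)\geq\delta$. Each excursion of the walk through $C(q)$ — i.e. each maximal time interval spent in $C(q)$ — lasts at most $l^d$ steps (crudely) or, better, has expected length bounded by a constant $c(d,l)$ since a simple random walk started anywhere in a box of side $l$ exits in expected time $O(l^2)$; and between consecutive such excursions the walk must, with probability bounded below, pass through $z$ and be killed there. More precisely, I would decompose the occupation time of $C(q)$ into successive visits and use the strong Markov property at each entrance to $C(q)$: starting from any point of $\mathbb Z^d$, the walk has probability $\geq p_0(d,l)>0$ of reaching $z$ before leaving a fixed enlarged ball around $C(q)$, hence (in the killed walk) probability $\geq p_0(1-e^{-\delta})$ of being absorbed before returning to $C(q)$ after an excursion. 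This gives a geometric bound: $\breve E_{0,\omega}\big(\#\{\text{excursions through }C(q)\}\big)\leq 1/(p_0(1-e^{-\delta}))$, uniformly in $\omega$ and $q$, and multiplying by the expected excursion length yields $\breve E_{0,\omega}(N(C(q)))\leq C(d,l,\delta)$.

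Summing over $q\in\mathcal C_1$ then requires controlling $|\mathcal C_1\cap(\text{range of the walk before }H(y))|$; but this is where the conditioning on $\{H(y)<\infty\}$ re-enters and where the real work lies, because a priori the walk could wander through unboundedly many occupied boxes. The device (as in \cite{KM12} and \cite{Sznit95}) is that reaching $y$ at all is costly: by Theorem B / equation (\ref{equation 3}) the annealed cost $b(0,y)$ is at least linear in $|y|$, and the quenched analogue $a(0,y,\omega)=-\ln e(0,y,\omega)\leq \alpha_F$-type linear bound holds $\mathbb P$-a.s. for $|y|$ large after a Borel--Cantelli argument; meanwhile visiting $k$ distinct occupied boxes forces a multiplicative survival penalty $\leq (1-p_0(1-e^{-\delta}))^{k}$, roughly. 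Balancing $e(0,y,\omega)\geq e^{-\alpha_F|y|-o(|y|)}$ against this penalty forces the number of distinct occupied boxes visited (on the event $H(y)<\infty$, in $\breve P$-expectation after dividing by $e(0,y,\omega)$) to be $O(|y|)$; combined with the per-box bound $C(d,l,\delta)$ this gives $\hat E_{0,\omega}^y(H_1)\leq C_1|y|$. The main obstacle is precisely this last balancing: making rigorous that the conditioning $\hat P_{0,\omega}^y=\breve P_{0,\omega}(\,\cdot\mid H(y)<\infty)$ does not allow the walk to spend anomalously long in occupied regions, which is handled by the exponential Chebyshev / change-of-measure argument of Sznitman adapted to the lattice, using that the divisor $e(0,y,\omega)$ is not too small on a full-measure set $\Omega_1$ (via the subadditive ergodic theorem of Theorem A).
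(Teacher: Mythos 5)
Your plan correctly identifies the essential ingredients --- the survival cost $e^{-\delta}$ at a $\delta$-site, the $\mathbb{P}$-a.s.\ lower bound $e(0,y,\omega)\geq e^{-\alpha_0|y|}$ for large $|y|$ coming from Theorem A, and a balancing step that turns exponential decay of a survival probability into a linear-in-$|y|$ bound after dividing by $e(0,y,\omega)$. But the particular decomposition you propose, namely (occupation time per occupied box) times (number of occupied boxes visited), has two genuine gaps. First, your per-box bound $\breve E_{0,\omega}(N(C(q)))\leq C(d,l,\delta)$ is established under the \emph{unconditioned} killed measure $\breve P$; it does not transfer to $\hat E_{0,\omega}^{y}$. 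The $h$-transformed walk under $\hat P_{0,\omega}^{y}$ is no longer subject to absorption, so the geometric bound on the number of excursions through $C(q)$ (``each excursion gives probability $\geq p_0(1-e^{-\delta})$ of being killed'') simply does not apply to it, and a priori the conditioned walk could linger. You flag that the conditioning is where the real work lies, but it is precisely this per-box step that needs it, and you leave it undone. Second, even granting a per-box bound under $\hat P^{y}$ together with $\hat E_{0,\omega}^{y}(\#\text{occupied boxes visited})\lesssim|y|$, their product does not control $\hat E_{0,\omega}^{y}(H_1)=\sum_q\hat E_{0,\omega}^{y}(N(C(q)))$: there is no Wald-type identity here, since which boxes get visited and how long each is occupied are not independent.

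The paper (following Lemma 2.1 of \cite{KM12}) avoids both problems by not separating boxes at all. It introduces one increasing sequence of stopping times $\sigma_1<\sigma_2<\cdots$, the successive entrances to $\bigcup_{q\in\mathcal C_1}C(q)$ spaced at least $dl$ apart, so that $H_1\leq dl\cdot|\{m:\sigma_m<H(y)\}|$, and then bounds $\breve P_{0,\omega}(\sigma_m<H(y)<\infty)$ directly, with $\{H(y)<\infty\}$ kept inside the probability so that the conditioning is absorbed into a single division by $e(0,y,\omega)$ at the very end. The survival cost enters via $Y$, the number of intervals $[\sigma_j,\sigma_j+dl]$ during which a $\delta$-site is hit: on $\{Y\geq(m-1)\epsilon\}$ one gains $e^{-(m-1)\epsilon\delta}$, while $\{Y<(m-1)\epsilon\}$ has exponentially small probability because $Y$ stochastically dominates a Binomial$(m-1,(2d)^{-dl})$ and one chooses $\epsilon<(2d)^{-dl}$. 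This gives $\breve P_{0,\omega}(\sigma_m<H(y)<\infty)\leq e^{-cm}$ uniformly in $\omega$, and then $\sum_m\min\{e(0,y,\omega),e^{-cm}\}\lesssim(-\ln e(0,y,\omega))\,e(0,y,\omega)$ yields $\hat E_{0,\omega}^{y}(H_1)\lesssim a(0,y,\omega)\lesssim|y|$ on a full-measure set. This single-sequence-of-stopping-times device, which simultaneously replaces your per-box step and your counting-boxes step, is the idea your plan is missing.
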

\begin{proof}
We use  the same argument as in lemma $2.1$ of \cite{KM12}. Then we don't repeat here. But we remark that with:
$$\sigma_1:=\inf\{n\geq 0: S_n \in \cup_{q\in \mathcal{C}_1}C(q)\}, \hspace*{0.5cm} \sigma_{m+1}:=\inf\{n\geq \sigma_m+ dl: S_n \in \cup_{q\in \mathcal{C}_1}C(q)\},$$
$$Y:=\big|\{ m: \exists n \in [\sigma_m, \sigma_m+dl] \mbox{ such that } V(S_n)\geq \delta\}\big|,$$
and $\epsilon \in (0,1)$ arbitrary, we have by the Markov property:
\begin{eqnarray} 
\notag 
\breve{P}_{0,\omega}(\sigma_m<H(y)<\infty)&= \breve{P}_{0,\omega}(\sigma_m<H(y)<\infty, Y \geq (m-1)\epsilon) \\ \notag 
&\hspace{0.4cm}+\breve{P}_{0,\omega}(\sigma_m<H(y)<\infty,Y < (m-1)\epsilon) \\ 
&\leq \exp(-(m-1)\epsilon \delta)+ P(Z<(m-1)\epsilon), 
\end{eqnarray}
where $Z$ is a binomial random variable with parameters $(m-1)$ and $(2d)^{-dl}$. Note that we use here: $P(\exists n \in [\sigma_m,\sigma_m+dl], n< H(y): V(S_n)\geq \delta)\geq (2d)^{-dl}$. Moreover, we can find $\alpha_0>0$ such that $e(0,y,\omega)\geq \exp(-\alpha_0|y|)$ since $\lim_{n\to\infty}\frac{-\ln e(0,ny,\omega)}{n}=\alpha(y)>0$ (Theorem A). Then we are now ready to follow the argument of lemma $2.1$ in \cite{KM12}.
\end{proof}
We shall now estimate the total number of cubes visited by the random walk $S_m$ up to time $H(y)$. The argument we follow is very similar to \cite{Sznit95}. We define:
\begin{equation}\label{equa 2.4}
\mathcal{A}_1=\{q \in \mathbb{Z}^d: H(C(q))<H(y)\}.
\end{equation}
Now $\hat{P}_{0,\omega}^y$-a.s. $H(y)$ is finite and, therefore,
\begin{equation} \label{equa 2.5}
\hat{P}_{0,\omega}^y\mbox{-a.s. }\mathcal{A}_1 \mbox{ is a lattice animal (i.e, a finite connected set) of $\mathbb{Z}^d$ containing $0$},
\end{equation}
where we use the standard adjacency relation for which $q,q'$ are adjacent if $|q-q'|\leq 1$. Let us now explain how to choose the side length $l$ of the cubes. We consider $\mathbb{P}$-probability that there exists an lattice animal $\Gamma$ containing $0$, of size $n$ and such that: 
\begin{equation}\label{equa 2.6}
\sum_{q\in \Gamma}\mathbf{1}_{\{C(q,l/4) \mbox{ is occupied }\}}\leq n/2,
\end{equation}
where $C(q,l/4)$ denotes the cube of side length $l/4$ and center $lq$. We first note that there are less than $(2d)^{2n}$ distinct animals $\Gamma$ in $\mathbb{Z}^d$ of cardinality $n$ containing $lq$. To see this, we use a spanning tree of $\Gamma$ with $n$ vertices and $n-1$ nearest neighbor edges. We have ''the number of lattice animals of cardinality $n$ and contain $lq$'' is less than ''the number of nearest neighbor paths starting at $lq$ of length at most $2n$ walking around the spanning tree''. With the definition of adjacent vertices above, the number of these paths does not exceed $(2d)^{2n}$ (see also \cite{Harry93}). By the definition of an occupied cube, we have: $p=\mathbb{P}( C(q,l/4) \mbox{ is occupied })=1-\mathbb{P}(C(q,l/4) \mbox{ is empty })=1-(1-\rho)^{(l^d/4^d)}$. Then,
\begin{align}
\mathbb{P}\big[\exists  \mbox{ an lattice animal } &\Gamma: 0 \in \Gamma ,\vert\Gamma\vert=n,\notag \\
&\sum_{q\in \Gamma}\mathbf{1}_{\{C(q,l/4) \mbox{ is occupied }\}}\leq n/2\big] \leq (2d)^{2n}p_n(l,d)\notag, 
\end{align}
where $p_n(l,d)$ stands for the probability that a binomial variable with parameters $n$ and $p=1-(1-\rho)^{(l^d/4^d)}$ takes a value smaller than $n/2$. Note that if $l$ is large, $p$ is close to $1$. We choose $l$ large enough such that:
\begin{equation}\label{equa 2.17}
\sum_{n=1}^{\infty}(2d)^{2n}p_n(l,d)<\infty,
\end{equation}
That such a choice of $l$ is possible follows from standard exponential estimates on the binomial distribution with success probability $p$ close to $1$. 

By Borel-Cantelli lemma, there is a set $\Omega_2$ of full $\mathbb{P}$-measure such that:
\begin{align}\label{equa 2.8}
\mbox{ for all }& \omega\in\Omega_2, \mbox{ there is } n_0(\omega) \mbox{ so that for all } n\geq n_0(\omega) \mbox{ and}\notag \\
&  \Gamma \mbox{ a } \mbox{ lattice animal containing $0$, with $\vert \Gamma\vert=n,$}\notag\\
 &\sum_{q\in\Gamma}\mathbf{1}_{\{C(q,l/4)\mbox{ is occupied }\}} \geq n/2.
\end{align}
Let us explain the strategy underlying the proof of an exponential estimate under $\hat{P}_{0,\omega}^{y}$ on the size of $\mathcal{A}_1$. The idea is to use (\ref{equa 2.8}), so that for typical configurations $\omega$ and large $\mathcal{A}_1$, the number of occupied sites in $\mathcal{A}_1$ represents a nonvanishing fraction of $\vert\mathcal{A}_1\vert$. 
Lemma \ref{lemme 2.2} below plays the role of theorem 1.3 of \cite{Sznit95}. We don't present its demonstration in detail.
\begin{lemma}\label{lemme 2.2}
There exists a set $\Omega_2$ of full $\mathbb{P}$ measure and $C_2(d,F) > 0$ such that, for $\omega\in \Omega_2$,
\begin{equation}\label{equa 2.9}
\sup_y\Big(e(0,y,\omega)\hat{E}_{0,\omega}^{y}\big[\exp(C_2\vert\mathcal{A}_1\vert)\big]\Big)<\infty.
\end{equation}
\end{lemma}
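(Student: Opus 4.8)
The plan is to adapt the proof of Theorem 1.3 in \cite{Sznit95}, converting the combinatorial input (\ref{equa 2.8}) into an exponential tail estimate. Since $\hat P_{0,\omega}^y(\cdot)=\breve P_{0,\omega}(\cdot\mid H(y)<\infty)$ and $e(0,y,\omega)=\breve P_{0,\omega}(H(y)<\infty)$, the quantity to bound is $\breve E_{0,\omega}\big[\exp(C_2|\mathcal{A}_1|);\,H(y)<\infty\big]$, and it suffices to bound it by a constant independent of $y$. The first step is to replace the number of all visited cubes by the number of \emph{occupied} ones, $K:=|\{q\in\mathcal{A}_1:C(q,l/4)\text{ is occupied}\}|$. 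By (\ref{equa 2.5}), $\hat P_{0,\omega}^y$-a.s. the set $\mathcal{A}_1$ is a lattice animal containing $0$; applying (\ref{equa 2.8}) to $\Gamma=\mathcal{A}_1$ gives $|\mathcal{A}_1|\le\max\{n_0(\omega),2K\}$ on $\{H(y)<\infty\}$, hence $\exp(C_2|\mathcal{A}_1|)\le e^{C_2 n_0(\omega)}+\exp(2C_2 K)$. Thus it is enough to find $c=c(d,F)>0$ with $\sup_y\breve E_{0,\omega}\big[\exp(cK);\,H(y)<\infty\big]<\infty$, after which one takes $C_2<c/2$.

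For this I would establish the geometric estimate $\breve P_{0,\omega}(K\ge j,\,H(y)<\infty)\le(1-c_0)^{j-1}$ for every $j\ge1$, uniformly in $y$, where $c_0=c_0(d,l,\delta)\in(0,1)$. The local ingredient is a box-crossing bound: if $C(q,l/4)$ is occupied it contains a site $z_q\ne y$ with $V(z_q)\ge\delta$, and for any $x\in C(q)$ the probability under $\breve P_{x,\omega}$ of exiting $C(q)$ without being killed equals $E_x\big[\prod_{m<\tau}e^{-V(S_m)}\big]$, with $\tau$ the exit time of $C(q)$; the walk reaches $z_q$ before $\tau$ with probability at least some $c_1=c_1(d,l)>0$ (a path of length $\le dl$ inside $C(q)$ joining $x$ to $z_q$ is followed with probability at least $(2d)^{-dl}$), and on that event the product is at most $e^{-\delta}$, so the probability is at most $1-c_1(1-e^{-\delta})=:1-c_0$. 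Now let $W_j$ denote the time of the $j$-th visit to a new occupied cube, with associated cube $B_j$; on $\{K\ge j,\,H(y)<\infty\}$ the walk visits $j$ distinct occupied cubes without being killed, so $W_j<\infty$, whence $\{K\ge j,\,H(y)<\infty\}\subseteq\{W_j<\infty\}$. Since reaching a new occupied cube after $W_{j-1}$ forces the walk first to leave $C(B_{j-1})$ without being killed, the strong Markov property at $W_{j-1}$ together with the local bound gives $\breve P_{0,\omega}(W_j<\infty)\le(1-c_0)\,\breve P_{0,\omega}(W_{j-1}<\infty)\le(1-c_0)^{j-1}$, which is the claimed estimate. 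Summing $\sum_{j\ge1}e^{cj}\,\breve P_{0,\omega}(K\ge j,\,H(y)<\infty)$ yields a finite bound, uniform in $y$, whenever $e^{c}(1-c_0)<1$; choosing $c<-\ln(1-c_0)$ and returning to the first step finishes the proof, the only $\omega$-dependence being the innocuous prefactor $e^{C_2 n_0(\omega)}$.

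The points left to check are all routine: the box-crossing lower bound for $c_1$, the strong-Markov and measurability bookkeeping at the random times $W_j$, and the remark that the notion of \emph{occupied} depends on the target $y$ only through the single cube that contains $y$, so that (\ref{equa 2.8}) and the constant $n_0(\omega)$ can be taken uniform in $y$. The genuine difficulty is conceptual: one has to trade the full size of the animal $\mathcal{A}_1$ for the number $K$ of occupied cubes it meets — which is precisely what (\ref{equa 2.8}) supplies — and one has to weaken the conditioning $\{H(y)<\infty\}$ to the tractable event $\{W_j<\infty\}$, so that the single-cube factor $1-c_0$ may be iterated without loss through the Markov property.
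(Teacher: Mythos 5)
Your argument is correct, and it reaches the conclusion by a slightly different mechanism than the paper's. The paper leans on Sznitman's Theorem~1.3 in \cite{Sznit95} almost verbatim: one introduces the successive box-crossing times at $\|\cdot\|$-distance $3l/4$ (as reflected in the definitions of $T_1$ and $\chi$ around~(\ref{equa 2.19})), observes that a single crossing can increase $|\mathcal{A}_1|$ by at most about $3^d$ cubes, invokes~(\ref{equa 2.8}) to see that on a set of full measure and for large animals a proportion at least $1/2$ of the visited cubes $C(q,l/4)$ are occupied, and finally charges each crossing that starts near an occupied sub-cube a uniform survival factor $\chi<1$ — which is exactly why the paper sets $C_2=\frac{1}{2\cdot 3^d}\ln(1/\chi)$. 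You instead iterate over the successive entrance times $W_j$ into \emph{new} occupied cubes, establishing $\breve P_{0,\omega}(K\geq j,\,H(y)<\infty)\leq(1-c_0)^{j-1}$ from the single-cube exit bound, and convert $|\mathcal{A}_1|$ into the occupied count $K$ via $|\mathcal{A}_1|\leq\max\{n_0(\omega),2K\}$, which is the same use of~(\ref{equa 2.8}). Both proofs have the same two ingredients — the lattice-animal estimate~(\ref{equa 2.8}) and a uniform-in-$\omega$, uniform-in-$y$ cost for surviving a local traversal — but yours sidesteps the geometric bookkeeping of how many cubes a $3l/4$-crossing can touch (the source of the $3^d$), at the price of some care with the random times $W_j$, the associated cubes $B_j$, and the strong Markov property at those stopping times. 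Your parenthetical remark about the $y$-dependence of the occupancy notion is well taken and applies to the paper's version too: since ``occupied'' excludes the target site $y$, at most one cube's occupancy status can differ from the $y$-independent notion, so the occupancy probability $p$ in the Borel–Cantelli estimate and the resulting $n_0(\omega)$ can indeed be made uniform in $y$ after a harmless adjustment (replace $p$ by $1-(1-\rho)^{(l/4)^d-1}$ and absorb the single exceptional cube in the animal count), but this should be stated rather than left implicit.
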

The constant $C_2$ is chosen as: 
\begin{equation}\label{equa 2.19}
C_2=\frac{1}{2\cdot 3^d}\ln (1/\chi)>0,
\end{equation} 
where: $$\chi=\sup_{||z||\leq l/2, C(0,l/4)\mbox{ is occupied}}E_z(\exp(-\sum_{m=0}^{T_1-1}V(S_m))),$$
and $T_1$ is the time of travel of $S$ at $\parallel\cdot\parallel$ distance $3l/4$ defined by:
$$T_0=0, \hspace{0.5cm} T_1=\inf \lbrace m\geq 0, ||S_m-S_0||\geq 3l/4\rbrace. $$
We want to explain why $\chi$ chosen above is strictly smaller than $1$ and hence $C_2>0$. If $C(0,l/4)$ is occupied, there exists $z_0\neq y, z_0 \in C(0,l/4)$ such that $V(z_0,\omega)\geq \delta$. When $||z||\leq l/2$ and $C(0,l/4)$ is occupied, by the strong Markov property, we have:
\begin{align}\label{equa 2.10}
E_z\big( \exp(-\sum_{m=0}^{T_1-1}V(S_m))\big)&=\breve{P}_{z,\omega}(T_1 < \infty)\notag \\
&=\breve{P}_{z,\omega}(T_1<H(z_0))+\breve{P}_{z,\omega}( H(z_0)\leq T_1<\infty)\notag \\
&=\breve{P}_{z,\omega}(T_1< H(z_0))+\breve{P}_{z,\omega}(H(z_0)\leq T_1)\breve{P}_{z_0,\omega}(T_1 < \infty)\notag \\
&\leq P_{z}(T_1< H(z_0))+P_{z}(H(z_0)\leq T_1)\exp(-\delta)\notag\\
&\leq 1+(e^{-\delta}-1)P_{z}(H(z_0)\leq T_1)\notag\\
&\leq 1+(e^{-\delta}-1)(2d)^{-\frac{-3ld}{2}} < 1,
\end{align}
where we now use $l/2+l/8<3l/4$ so that $z_0$ is strictly within $\|\cdot \|$ distance $3l/4$ from $z$ and consequently $P_z(H(z_0)\leq T_1)>(2d)^{-\frac{-3ld}{2}}$.\\
By using (\ref{equa 2.8}) and the choice of $C_2$ in (\ref{equa 2.19}), it is now easy to follow the argument as in theorem 1.3 of \cite{Sznit95} to obtain (\ref{equa 2.9}).
\begin{lemma}\label{lemme 2.3}
There exists a constant $C_3=C_3(d,F)<\infty$ and a set $\Omega_2$ of full $\mathbb{P}$ measure such that for all $\omega \in \Omega_2$:
\begin{equation}\label{equa 2.16}
\limsup_{|y| \to \infty}\frac{\hat{E}_{0,\omega}^{y}(H_2(y))}{|y|} <C_3.
\end{equation}
\end{lemma}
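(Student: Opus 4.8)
The plan is to split $H_2$ by cube, bound the time spent in each \emph{visited empty} cube by a deterministic constant, and then trade the number of visited cubes against the exponential moment bound already established in Lemma~\ref{lemme 2.2}. Since $\{C(q)\}_{q\in\mathbb{Z}^d}$ is a partition of $\mathbb{Z}^d$, writing $L_q:=\sum_{m=0}^{H(y)-1}\mathbf{1}_{\{S_m\in C(q)\}}$ for the occupation time of $C(q)$ before $H(y)$, we have $H_2=\sum_{q\in\mathcal{C}_2}L_q=\sum_{q\in\mathcal{A}_1\cap\mathcal{C}_2}L_q$, because $L_q>0$ forces $H(C(q))<H(y)$, i.e.\ $q\in\mathcal{A}_1$ (cf.\ (\ref{equa 2.4})). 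So the goal is to bound $\hat{E}_{0,\omega}^{y}(H_2)$ by a constant times $\hat{E}_{0,\omega}^{y}(|\mathcal{A}_1|)$ up to lower order terms.

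The key point, and the only place where $d\ge 3$ enters, is the following uniform estimate: there is $M_1=M_1(d,F)<\infty$ such that for every $y$, every empty cube $C(q)$, and every $z\in C(q)\setminus\{y\}$,
\begin{equation}\label{equa perbox}
\hat{E}_{z,\omega}^{y}(L_q)=\frac{\breve{E}_{z,\omega}[L_q;H(y)<\infty]}{e(z,y,\omega)}\le M_1 .
\end{equation}
To prove (\ref{equa perbox}) I would apply the Markov property of $\breve{P}_{z,\omega}$ at each time $m$: on $\{S_m\in C(q),\,m<H(y)\}$ one has $\breve{P}_{z,\omega}(H(y)<\infty\mid\mathcal{F}_m)=e(S_m,y,\omega)$, and the submultiplicativity $e(z,y,\omega)\ge e(z,S_m,\omega)\,e(S_m,y,\omega)$ gives $e(S_m,y,\omega)\le e(z,y,\omega)/e(z,S_m,\omega)$. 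Since $z$ and $S_m$ both lie in the \emph{empty} cube $C(q)$, they can be joined by a self-avoiding lattice path of length at most $2dl$ that stays inside $C(q)$ and avoids $y$; all its sites carry potential $<\delta$, whence $e(z,S_m,\omega)\ge (2d)^{-2dl}e^{-2dl\delta}=:\eta_l>0$, a deterministic constant. Summing over $m$ yields $\breve{E}_{z,\omega}[L_q;H(y)<\infty]\le\eta_l^{-1}\,e(z,y,\omega)\,\breve{E}_{z,\omega}[L_q]$, and finally $\breve{E}_{z,\omega}[L_q]\le\sum_{w\in C(q)}G(z,w)\le l^{d}\,G(0,0)<\infty$, $G$ being the Green function of the simple random walk on $\mathbb{Z}^d$; it is exactly here that transience ($d\ge 3$, i.e.\ $G(0,0)<\infty$) is used, and the estimate collapses for $d\le 2$. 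Thus $M_1=l^{d}G(0,0)/\eta_l$ works.

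With (\ref{equa perbox}) in hand, the remainder is bookkeeping with the strong Markov property of $\breve{P}_{0,\omega}$ at the stopping time $H(C(q))$. On $\{q\in\mathcal{A}_1\}$ the walk sits at $z=S_{H(C(q))}\in C(q)\setminus\{y\}$, and applying (\ref{equa perbox}) together with the identity $\breve{P}_{z,\omega}(H(y)<\infty)=e(z,y,\omega)$ gives $\hat{E}_{0,\omega}^{y}\!\big(L_q\mathbf{1}_{\{q\in\mathcal{A}_1\}}\big)\le M_1\,\hat{P}_{0,\omega}^{y}(q\in\mathcal{A}_1)$; summing over $q\in\mathcal{C}_2$ yields $\hat{E}_{0,\omega}^{y}(H_2)\le M_1\,\hat{E}_{0,\omega}^{y}(|\mathcal{A}_1|)$. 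Now Lemma~\ref{lemme 2.2} and Jensen's inequality give, on the full-measure set $\Omega_2$, $\hat{E}_{0,\omega}^{y}(|\mathcal{A}_1|)\le C_2^{-1}\big(\ln K(\omega)+\ln(1/e(0,y,\omega))\big)$ with $K(\omega):=\sup_y e(0,y,\omega)\hat{E}_{0,\omega}^{y}[\exp(C_2|\mathcal{A}_1|)]<\infty$; combining this with the deterministic lower bound $e(0,y,\omega)\ge e^{-\alpha_0|y|}$ already used in the proof of Lemma~\ref{lemme 2.1} (via Theorem~A), we conclude $\limsup_{|y|\to\infty}\hat{E}_{0,\omega}^{y}(H_2)/|y|\le M_1\alpha_0/C_2=:C_3<\infty$ for all $\omega$ in a full-measure subset of $\Omega_2$, which is the claim. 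The main obstacle is really (\ref{equa perbox}): one must check that $M_1$ is genuinely uniform in the target $y$ and the base point $z$, and this is precisely what the emptiness of $C(q)$ (a short connecting path of controlled potential) and the transience of the walk in dimension $d\ge 3$ are there to guarantee.
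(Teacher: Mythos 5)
Your proof is correct and follows essentially the same route as the paper: both reduce $\hat{E}_{0,\omega}^{y}(H_2)$ to a constant multiple of $\hat{E}_{0,\omega}^{y}(|\mathcal{A}_1|)$ via the submultiplicativity bound $e(z',y,\omega)/e(z,y,\omega)\le 1/e(z,z',\omega)$ together with the lower bound on $e(z,z',\omega)$ inside an empty cube and the finiteness of the Green function (transience, $d\ge 3$), and then close with Lemma \ref{lemme 2.2} plus Jensen and the linear lower bound on $\ln(1/e(0,y,\omega))$ from Theorem~A. The only differences are presentational (you apply the Markov property time-by-time rather than site-by-site as in (\ref{equa 2.14}), and you are slightly more explicit about choosing the connecting path to avoid $y$), which do not change the substance of the argument.
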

\begin{proof}
Using the strong Markov property, we have:
\begin{align}\label{equa 2.13}
 \hat{E}_{0,\omega}^{y}(H_2(y))&=\sum_{q\in \mathcal{C}_2}\hat{E}_{0,\omega}^{y}\big(\sum_{m=0}^{H(y)-1}\mathbf{1}_{\{S_m\in C(q)\}}\big)\notag\\
 &=\sum_{q\in \mathcal{C}_2}\hat{E}_{0,\omega}^{y}\Big(\hat{E}_{0,\omega}^{y}\big(\sum_{m=0}^{H(y)-1}\mathbf{1}_{\{S_m\in C(q)\}}\vert\mathcal{F}(H(C(q)))\big) \Big)\notag\\
 &=\sum_{q\in \mathcal{C}_2}\hat{E}_{0,\omega}^{y}\Big(H(C(q))<H(y),\hat{E}_{S_{H(C(q))},\omega}^y\big(\sum_{m=0}^{H(y)-1}\mathbf{1}_{\{S_m\in C(q)\}} \big)\Big).
 \end{align}
For $z\in C(q)$, we consider:
\begin{align}\label{equa 2.14}
\hat{E}_{z,\omega}^y&\big(\sum_{m=0}^{H(y)-1}\mathbf{1}_{\{S_m\in C(q)\}} \big)\notag\\
&=\sum_{z'\in C(q)}\hat{E}_{z,\omega}^y\big(\sum_{m=0}^{H(y)-1}\mathbf{1}_{\{S_m=z'\}} \big)\notag\\
&=\frac{1}{e(z,y,\omega)}\sum_{z'\in C(q)}E_{z}\bigg(\sum_{m=0}^{H(y)-1}\mathbf{1}_{\{S_m=z'\}}\exp\big(-\sum_{m=0}^{H(y)-1}V(S_m)\big),H(y)<\infty\bigg)\notag\\
&=\frac{1}{e(z,y,\omega)}\sum_{z'\in C(q)}\sum_{k=0}^{+\infty}E_{z}\bigg(\mathbf{1}_{\{S_k=z'\}}\exp\big(-\sum_{m=0}^{H(y)-1}V(S_m)\big),k<H(y)<\infty\bigg)\notag\\
&=\frac{1}{e(z,y,\omega)}\sum_{z'\in C(q)}\sum_{k=0}^{+\infty}E_{z}\big(\mathbf{1}_{\{S_k=z'\}} \exp(-\sum_{m=0}^{k-1}V(S_m)),k<H(y)\big)\breve{P}_{z',\omega}(H(y)<\infty)\notag\\
&\leq \sum_{z'\in C(q)}\frac{e(z',y,\omega)}{e(z,y,\omega)}\sum_{k=0}^{+\infty}E_z\big(\mathbf{1}_{\{S_k=z'\}}\big).
\end{align}
If $z,z'\in C(q)$ and $C(q)$ is empty, then:
\begin{align}\label{equa 2.15}
&e(z,z',\omega)\notag\\
&=E_z\big(\exp(-\sum_{m=0}^{H(z')-1}V(S_m)), H(z')< \infty\big)\notag\\
&\geq E_z\big(\exp(-\sum_{m=0}^{H(z')-1}V(S_m)), \mbox{ $S_m$ follows a path from $z$ to $z'$ in $C(q)$ of length $\leq$ $dl$}\big)\notag\\
&\geq \exp(-dl\delta)(2d)^{-dl}=const.
\end{align}
By proposition $2$ in \cite{Zer98}, for all $z,z',y \in \mathbb{Z}^d$, we have: $\frac{e(z',y,\omega)}{e(z,y,\omega)}\leq \frac{1}{e(z,z',\omega)}$. If $C(q)$ is empty, from (\ref{equa 2.15}), we can find a constant $K(d,F)$ such that: 
\begin{equation}\label{equa 2.18}
\frac{e(z',y,\omega)}{e(z,y,\omega)}\leq K(d,F) \qquad  \mbox{for all } z,z'\in C(q),y \in \mathbb{Z}^d.
\end{equation}
From (\ref{equa 2.13}), (\ref{equa 2.14}) and (\ref{equa 2.18}), we obtain:
$$\hat{E}_{0,\omega}^{y}(H_2(y))\leq \sum_{q\in\mathcal{C}_2}K(d,F)\hat{P}_{0,\omega}^{y}[H(C(q))<H(y)]\sup_{z\in C(q)}\sum_{z'\in C(q)}\sum_{k=0}^{+\infty}E_z\big(\mathbf{1}_{\{S_k=z'\}}\big). $$
For $d\geq 3$, the simple walk  is transient, $\sup_{z\in C(q)}\sum_{z'\in C(q)}\sum_{k=0}^{+\infty}E_z\big(\mathbf{1}_{\{S_k=z'\}}\big)=const(d)<\infty$, we see that:
$$\hat{E}_{0,\omega}^{y}[H_2(y)]\leq C_4(d,F)\hat{E}_{0,\omega}^{y}[|\mathcal{A}_1|],$$
where the definition of $\mathcal{A_1}$ is given by (\ref{equa 2.4}). By lemma \ref{lemme 2.2}, there exists a set $\Omega_2$ of full $\mathbb{P}$ measure and $C_5(\omega)\in (0,\infty)$ which depends only on $\omega$ such that for all $\omega \in \Omega_2$:
\begin{align}
\limsup_{|y|\to\infty}\Big(\frac{\hat{E}_{0,\omega}^{y}C_2|\mathcal{A}_1|}{|y|}\Big)&\leq\limsup_{|y|\to\infty}\frac{\ln \hat{E}_{0,\omega}^{y}\Big[\exp\big(C_2|\mathcal{A}_1|\big)\Big]}{|y|} \notag\\ 
& \leq \limsup_{|y|\to\infty}\frac{\ln \big(\frac{C_5(\omega)}{e(0,y,\omega)}\big)}{|y|}\notag\\
%&\leq \limsup_{|y|\to\infty} \frac{\ln C_5(\omega)-\ln e(0,y,\omega)}{|y|} \notag\\
&\leq \sup _{|e|=1}\alpha_{F}(e)=const.\notag
\end{align}
We finally obtain (\ref{equa 2.16}).
\end{proof}
With the notations of (\ref{equa 2.11}), we know that:
\begin{equation}\label{equa 2.12}
\hat{E}_{0,\omega}^{y}(H(y))=\hat{E}_{0,\omega}^{y}(H_1)+\hat{E}_{0,\omega}^{y}(H_2).                          
\end{equation}
By lemma \ref{lemme 2.1} and lemma \ref{lemme 2.3}, part (i) of theorem \ref{theorem 1.1} is now proved.\\
\textbf{Proof of part (ii) of the theorem \ref{theorem 1.1}.}\\
Let $y\in\mathbb{Z}^d, y\neq 0$.\\
By the strong Markov property, we have:
\begin{align}\label{equab 3}
E_0[H(y)&\exp(-\sum_{m=0}^{H(y)-1}V(S_m)),H(y)<\infty]\notag\\
&=\sum_{z'\in\mathbb{Z}^d} E_0\Big(\sum_{m=0}^{H(y)-1}\mathbf{1}_{\{S_m=z'\}}\exp(-\sum_{m=0}^{H(y)-1}V(S_m)), H(y)<\infty \Big)\notag\\
&=\sum_{z'\in\mathbb{Z}^d} E_0\Big(H(z')<H(y),\exp(-\sum_{m=0}^{H(z')-1}V(S_m))\Big) \notag\\
&\qquad \qquad \times E_{z'}\Big(\sum_{m=0}^{H(y)-1}\mathbf{1}_{\{S_m=z'\}}\exp(-\sum_{m=0}^{H(y)-1}V(S_m)), H(y)<\infty \Big)\notag\\
&=\sum_{z'\in\mathbb{Z}^d} E_0\Big(H(z')<H(y),\exp(-\sum_{m=0}^{H(z')-1}V(S_m))\Big)e(z',y,\omega) \notag\\
&\qquad \qquad \times \frac{1}{e(z',y,\omega)} E_{z'}\Big(\sum_{m=0}^{H(y)-1}\mathbf{1}_{\{S_m=z'\}}\exp(-\sum_{m=0}^{H(y)-1}V(S_m)), H(y)<\infty \Big)\notag\\
&=\sum_{z'\in\mathbb{Z}^d} E_0\Big(H(z')<H(y),\exp(-\sum_{m=0}^{H(y)-1}V(S_m)), H(y)<\infty \Big)\notag\\
&\qquad \qquad \times \hat{E}_{z',\omega}^{y}(\sum_{m=0}^{H(y)-1}\mathbf{1}_{\{S_m=z'\}})\notag\\
\end{align}
Still by the Markov property:
\begin{align}\label{equab 4}
\hat{E}_{z',\omega}^{y}&(\sum_{m=0}^{H(y)-1}\mathbf{1}_{\{S_m=z'\}})=\frac{1}{e(z',y,\omega)}E_{z'}\bigg(\sum_{m=0}^{H(y)-1}\mathbf{1}_{\{S_m=z'\}}\exp\big(-\sum_{m=0}^{H(y)-1}V(S_m)\big),H(y)<\infty\bigg)\notag\\
&=\frac{1}{e(z',y,\omega)}\sum_{k=0}^{+\infty}E_{z'}\bigg(\mathbf{1}_{\{S_k=z'\}}\exp\big(-\sum_{m=0}^{H(y)-1}V(S_m)\big),k<H(y)<\infty\bigg)\notag\\
&=1+\frac{1}{e(z',y,\omega)}\sum_{k=1}^{+\infty}E_{z'}\big(\mathbf{1}_{\{S_k=z'\}} \exp(-\sum_{m=0}^{k-1}V(S_m)),k<H(y)\big)\breve{P}_{z',\omega}(H(y)<\infty)\notag\\
&\leq\sum_{k=0}^{+\infty}E_{z'}(\mathbf{1}_{\{S_k=z'\}}):= D(d) <\infty. \notag\\
\end{align}
since the simple random walk is transient on $\mathbb{Z}^d, d\geq 3$. We now attach to each trajectory $(S_m)_{m\geq 0}$ which starts at $0$, the lattice animal:
\begin{equation}\label{equab 5}
\mathcal{A}_2(0,y,(S_m)_{m\geq 0})=\{z\in\mathbb{Z}^d: H(z)<H(y)\}.
\end{equation}
From (\ref{equab 3}), (\ref{equab 4}) and (\ref{equab 5}):
\begin{equation}\label{equab 6}
\hat{\mathbb{E}}_{0}^{y}(H(y))=\frac{\mathbb{E}E_0[H(y)\exp(-\sum_{m=0}^{H(y)-1}V(S_m)),H(y)<\infty]}{\mathbb{E}e(0,y,\omega)}\leq D\hat{\mathbb{E}}_{0}^{y}(|\mathcal{A}_2(0,y)|).
\end{equation}
To estimate $\hat{\mathbb{E}}_{0}^{y}(|\mathcal{A}_2(0,y)|)$, we argue as in lemma 3 in \cite{Zer98}. Take $d_1:=-\ln \int e^{-t}dF(t)=-\ln \mathbb{E}(e^{-V(0)})$. By Jensen's equality and independence of $V(x)$, $x\in \mathbb{Z}^d$:
\begin{align}\label{equab 7}
d_1\hat{\mathbb{E}}_{0}^{y}(|\mathcal{A}_2(0,y)|)&\leq \ln \hat{\mathbb{E}}_{0}^{y}(\exp(d_1|\mathcal{A}_2(0,y)|))\notag\\
&\leq b(0,y)+\ln \mathbb{E} E_0[\exp(d_1|\mathcal{A}_2(0,y)|-\sum_{s\in\mathcal{A}_2(0,y)}V(s)), H(y)<\infty]\notag\\
&\leq b(0,y)+\ln E_0 \Big( \prod_{s\in \mathcal{A}_2(0,y)}\mathbb{E}(\exp(d_1-V(s))) \Big)=b(0,y)
\end{align}
From (\ref{equab 6}), (\ref{equab 7}) and (\ref{equation 3}):
\begin{align}\label{equab 11}
\frac{\hat{\mathbb{E}}_{0}^{y}(H(y))}{|y|}\leq \frac{D}{d_1} \frac{b(0,y)}{|y|}\leq \frac{D}{-\ln \int \exp(-t_1) dF(t) }\big(\ln 2d-\ln \int \exp(-t)dF(t)\big)
\end{align}
The proof of (ii) in theorem \ref{theorem 1.1} is now complete.
\section{Continuity of Lyapunov exponents}
The following proposition is a main ingredient in the proof of the continuity of Lyapunov exponent. It verifies the condition (ii) of Theorem \ref{theorem 1.2}. We learned the idea from theorem $7.12.$ in \cite{Wier78}. $F*G$ denotes the convolution of $F$ and $G$.
\begin{proposition}\label{propo 3.2}
Let $d\geq 3$. For any distribution function $F \in \mathcal{D}_1$, there exists $c_1(F)>0$ and $f_1(F)>0$ such that:
\begin{itemize}
\item[\rm{(1)}] $c_1(F)\leq c_1(G)$ for all $F, G \in \mathcal{D}_1$ such that $F\geq G$, 
\item[\rm{(2)}] $\lim_{n\to\infty}f_1(F_n)=f_1(F)$ for $F_n \in \mathcal{D}_1 $, $F_n\stackrel{w}{\to}F$,
\item[\rm{(3)}] $\alpha_{F*G}(x)\leq \alpha_{F}(x)+c_1(F)f_1(F)\int tdG(t)|x|$ for all $F, G \in \mathcal{D}_1$ and $x\in\mathbb{Z}^d$.
\end{itemize}
\end{proposition}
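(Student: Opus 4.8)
The plan is to realise the convolution by an additive coupling. Let $V$ be a potential with law $F$ and $W$ an independent potential with law $G$, both independent of the walk, and write $\omega=(\omega_V,\omega_W)$; then $V+W$ has law $F*G$, and $F*G\in\mathcal D_1$. Using the definition of the quenched path measure attached to $V$ alone, one has the exact identity
\begin{equation*}
e(0,y,F*G,\omega)=e(0,y,F,\omega_V)\,\hat E_{0,\omega_V}^{y}\Big[\exp\Big(-\sum_{m=0}^{H(y)-1}W(S_m)\Big)\Big],
\end{equation*}
so that $a(0,y,F*G,\omega)=a(0,y,F,\omega_V)-\ln\hat E_{0,\omega_V}^{y}\big[\exp(-\sum_{m=0}^{H(y)-1}W(S_m))\big]$. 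Since $-\ln$ is convex, Jensen's inequality for the path expectation $\hat E_{0,\omega_V}^{y}$ bounds the last term by $\hat E_{0,\omega_V}^{y}\big[\sum_{m=0}^{H(y)-1}W(S_m)\big]$; integrating over $\omega_W$ (which is independent of $\hat P_{0,\omega_V}^{y}$, and along each fixed path $\sum_{m<H(y)}W(S_m)$ has $\omega_W$-mean $H(y)\int t\,dG(t)$) gives
\begin{equation*}
\mathbb E\big[a(0,y,F*G,\omega)\big]\le\mathbb E_{V}\big[a(0,y,F,\omega_V)\big]+\Big(\int t\,dG(t)\Big)\,\mathbb E_{V}\big[\hat E_{0,\omega_V}^{y}(H(y))\big].
\end{equation*}

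Next I divide by $n$, put $y=nx$ and let $n\to\infty$. By Theorem A the first two terms converge to $\alpha_{F*G}(x)$ and $\alpha_{F}(x)$ respectively, hence
\begin{equation*}
\alpha_{F*G}(x)\le\alpha_{F}(x)+\Big(\int t\,dG(t)\Big)\,\limsup_{n\to\infty}\frac1n\,\mathbb E_{V}\big[\hat E_{0,\omega_V}^{nx}(H(nx))\big].
\end{equation*}
So the whole statement reduces to an $L^{1}(\mathbb P)$ version of part (i) of Theorem \ref{theorem 1.1}: a finite constant $\theta(d,F)$ with $\limsup_{|y|\to\infty}|y|^{-1}\mathbb E_{V}\big[\hat E_{0,\omega_V}^{y}(H(y))\big]\le\theta(d,F)$ (then $\limsup_n n^{-1}\mathbb E_{V}[\hat E_{0,\omega_V}^{nx}(H(nx))]\le\theta(d,F)|x|$ because $|nx|=n|x|$, which yields $(3)$). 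I would obtain this by inspecting the proof of Theorem \ref{theorem 1.1}(i): the inequality $\hat E_{0,\omega}^{y}(H_1)\le C_1(d,F)|y|$ holds for $\omega$ in a full-measure set and all $y$, so it survives $\mathbb E_{V}$; and the proof of Lemma \ref{lemme 2.3} gives the pathwise estimates $\hat E_{0,\omega}^{y}(H_2)\le C_4(d,F)\hat E_{0,\omega}^{y}(|\mathcal A_1|)$ and, via Lemma \ref{lemme 2.2} and Jensen, $\hat E_{0,\omega}^{y}(|\mathcal A_1|)\le C_2^{-1}\big(\ln C_5(\omega)+a(0,y,\omega)\big)$. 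Taking $\mathbb E_{V}$ and using the subadditivity bound $\mathbb E_{V}[a(0,y,\omega)]\le|y|\max_i\mathbb E_{V}[a(0,e_i,\omega)]=:|y|M(F)$ (finite precisely because $F\in\mathcal D_1$) together with $\mathbb E_{V}[\ln C_5(\omega)]<\infty$ (which follows from the exponential tail of the random scale produced by Borel--Cantelli in Lemma \ref{lemme 2.2}), one gets $\theta(d,F)=C_1(d,F)+C_4(d,F)C_2^{-1}M(F)<\infty$.

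It then remains to factor $\theta(d,F)\le c_1(F)f_1(F)$ correctly. I would take
\begin{equation*}
f_1(F):=\frac{1}{-\ln\int e^{-t}\,dF(t)}\in(0,\infty),
\end{equation*}
which is weakly continuous on $\mathcal D$ since $t\mapsto e^{-t}$ is bounded and continuous, so $\int e^{-t}\,dF_n(t)\to\int e^{-t}\,dF(t)\in(0,1)$ whenever $F_n\stackrel{w}{\to}F$; this is condition $(2)$. Setting $c_1(F):=\theta(d,F)/f_1(F)>0$ makes condition $(3)$ hold. The step I expect to be the real obstacle is condition $(1)$, namely $c_1(F)\le c_1(G)$ when $F\ge G$: one must track how each ingredient of $\theta(d,F)$ depends on the law --- the box-partition parameters $\delta,l,\rho$, the constants $C_1,C_2,C_4$, and $M(F)=\max_i\mathbb E_{V}[-\ln e(0,e_i,\omega)]$ --- and verify that, after multiplying by $-\ln\int e^{-t}\,dF(t)$, the net dependence is nonincreasing for the stochastic order. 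For this one exploits that a stochastically larger potential makes every $e(0,\cdot,\omega)$ smaller, hence $M$ nonincreasing and $\alpha_F$ (which enters $C_1$ through $e(0,y,\omega)\ge e^{-\alpha_0|y|}$) nonincreasing, while when comparing $F\ge G$ the free parameters $\delta$ and $l$ may be chosen depending on $G$ only, so that $C_2,C_4$ and $\rho$ are controlled uniformly. If no clean monotone closed form survives this computation, one can instead replace $\theta(d,F)$ by any monotone majorant obtained by estimating $\delta,l,\rho$ in terms of $G$ rather than $F$, at the cost of a larger but still admissible constant.
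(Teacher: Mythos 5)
Your coupling argument and the first half of the proof (writing $a(0,nx,F*G,\omega)=a(0,nx,F,\omega_V)-\ln\hat E_{0,\omega_V}^{nx}[\exp(-\sum_{m<H(nx)}W(S_m))]$, applying Jensen, and integrating out $\omega_W$ to reach $\mathbb E[a(0,nx,F*G)]\le\mathbb E[a(0,nx,F)]+\int t\,dG(t)\cdot\mathbb E\hat E_{0,\omega}^{nx}(H(nx))$) coincide with the paper's (3.8)--(3.9). Your choice of $f_1$ is also the paper's. The divergence, and the gap, is in how you bound $\mathbb E\hat E_{0,\omega}^{nx}(H(nx))$.

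You route this through Theorem~\ref{theorem 1.1}(i), i.e.\ through Lemmas~\ref{lemme 2.1}--\ref{lemme 2.3}. That is the wrong tool, for two reasons. First, your inequality $\hat E_{0,\omega}^{y}(|\mathcal A_1|)\le C_2^{-1}(\ln C_5(\omega)+a(0,y,\omega))$ requires $\mathbb E[\ln C_5(\omega)]<\infty$, which you assert without proof; $C_5(\omega)$ is built from the random threshold $n_0(\omega)$ in the Borel--Cantelli step of (\ref{equa 2.8}), and integrability of its logarithm is an additional tail estimate that Lemma~\ref{lemme 2.2} does not give you for free. Second, and more seriously, the resulting constant $\theta(d,F)=C_1+C_4C_2^{-1}M(F)$ involves $C_1,C_2,C_4$ which depend on the box scale $l$ and the threshold $\delta$ chosen for $F$; condition (1) of the Proposition demands that $c_1(\cdot)$ be a single monotone map on $\mathcal D_1$ (i.e.\ $c_1(F)\le c_1(G)$ whenever $F\ge G$), and you cannot meet this by ``choosing $\delta,l$ depending on $G$ only'', because $c_1(F)$ must be defined intrinsically in terms of $F$, not in terms of the pair being compared. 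You flag this as ``the real obstacle'' and leave it unresolved; this is a genuine gap, not a technicality, since (1) is exactly what makes the abstract Theorem~\ref{theorem 1.2} applicable.

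The paper avoids the obstacle entirely by \emph{not} invoking Theorem~\ref{theorem 1.1}(i). Instead it reuses the pathwise inequality from the proof of part (ii): by the Markov property and transience in $d\ge3$ (equations (\ref{equab 3})--(\ref{equab 4})) one has $\hat E_{0,\omega}^{nx}(H(nx))\le D(d)\,\hat E_{0,\omega}^{nx}(|\mathcal A_2(0,nx)|)$ with $D(d)=\sum_k P_0(S_k=0)$ a constant of $d$ alone, and then Zerner's Lemma~3 gives
\begin{equation*}
\mathbb E\,\hat E_{0,\omega}^{nx}\big(|\mathcal A_2(0,nx)|\big)\le\frac{\ln 2d+\int t\,dF(t)}{-\ln\int e^{-t}\,dF(t)}\,|nx|,
\end{equation*}
yielding the explicit $c_1(F)=D(d)\big(\ln 2d+\int t\,dF(t)\big)$ and $f_1(F)=1/(-\ln\int e^{-t}dF(t))$, for which (1) is immediate (since $\int t\,dF\le\int t\,dG$ when $F\ge G$) and (2) follows from weak continuity of $\int e^{-t}\,dF$. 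You should replace the quenched-Theorem route in the second half of your argument with this Zerner-Lemma-3 route.
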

\begin{proof}
Let $V(x)$, $x \in \mathbb{Z}^{d}$ be i.i.d random potentials  with distribution $F$; $W(x)$, $x \in \mathbb{Z}^{d}$ be i.i.d random potentials with distribution $G$ such that the two sequences defined on a same probability space $(\Omega, \mathcal{F}, \mathbb{P})$ are independent of each other . Then, $(V+W)(x)$, $x\in\mathbb{Z}^{d}$ are i.i.d random potentials with distribution $F*G$. We have for $x\in\mathbb{Z}^d, x\neq 0, \omega \in \Omega$:
\begin{align}\label{equa 3.8}
a&(0,nx,F*G,\omega)\notag\\
&=-\ln E_0[\exp (-\sum_{m=0}^{H(nx)-1}V(S_m)-\sum_{m=0}^{H(nx)-1}W(S_m)), H(nx)<\infty] \notag \\
&=-\ln \Big[\frac{ E_0\big[\exp (-\sum_{m=0}^{H(nx)-1}V(S_m)-\sum_{m=0}^{H(nx)-1}W(S_m)), H(nx)<\infty\big]}{E_0[\exp (-\sum_{m=0}^{H(nx)-1}V(S_m)), H(nx)<\infty]}\notag\\
&\hspace{0.5cm}\cdot E_0[\exp (-\sum_{m=0}^{H(nx)-1}V(S_m)), H(nx)<\infty] \Big]\notag \\
&=-\ln \hat{E}_{0,\omega}^{nx}\Big(\exp(-\sum_{m=0}^{H(nx)-1}W(S_m))\Big)-\ln E_0[\exp (-\sum_{m=0}^{H(nx)-1}V(S_m)), H(nx)<\infty]\notag\\ 
&\mbox{(where $\hat{E}_{x,\omega}^{y}(X)=\frac{E_x[X\exp (-\sum_{m=0}^{H(y)-1}V(S_m)), H(y)<\infty]}{ E_x[\exp (-\sum_{m=0}^{H(y)-1}V(S_m)), H(y)<\infty]}$ is defined below (\ref{equab 1}))}\notag\\
&=-\ln \hat{E}_{0,\omega}^{nx}\Big(\exp(-\sum_{m=0}^{H(nx)-1}W(S_m))\Big)+a(0,nx,F,\omega)\notag \\
&\leq \hat{E}_{0,\omega}^{nx}\Big(\sum_{m=0}^{H(nx)-1}W(S_m)\Big)+a(0,nx,F,\omega).
\end{align}
Note that the last inequality is obtained by the Jensen's inequality: $\ln E(\exp (X))\geq E(X)$. We use now Fubini's theorem and the independence of $(W(x))$ and $(V(x))$:
\begin{align}\label{equa 3.9}
\mathbb{E}\hat{E}_{0,\omega}^{nx}\Big(\sum_{m=0}^{H(nx)-1}W(S_m)\Big)&=\mathbb{E}\Big(E_0\big(\frac{\sum_{m=0}^{H(nx)-1}W(S_m)\exp (-\sum_{m=0}^{H(nx)-1}V(S_m)), H(nx)<\infty}{E_0(\exp (-\sum_{m=0}^{H(nx)-1}V(S_m)), H(nx)<\infty)}\big)\Big) \notag\\
&=E_0\Big(\mathbb{E}\big(\frac{\sum_{m=0}^{H(nx)-1}W(S_m)\exp (-\sum_{m=0}^{H(nx)-1}V(S_m)), H(nx)<\infty}{E_0(\exp (-\sum_{m=0}^{H(nx)-1}V(S_m)), H(nx)<\infty)}\big)\Big) \notag\\
&=E_0\Big(\mathbb{E}(\sum_{m=0}^{H(nx)-1}W(S_m))\mathbb{E}\big(\frac{\exp (-\sum_{m=0}^{H(nx)-1}V(S_m)), H(nx)<\infty}{E_0(\exp (-\sum_{m=0}^{H(nx)-1}V(S_m)), H(nx)<\infty)}\big)\Big)\notag\\
&=E_0\Big(H(nx)\mathbb{E}(W(0))\mathbb{E}\big(\frac{\exp (-\sum_{m=0}^{H(nx)-1}V(S_m)), H(nx)<\infty}{E_0(\exp (-\sum_{m=0}^{H(nx)-1}V(S_m)), H(nx)<\infty)}\big)\Big) \notag\\
&=\int tdG(t)\cdot \mathbb{E}\hat{E}_{0,\omega}^{nx}(H(nx)).
\end{align}
Apply the argument as (\ref{equab 3}), we have:
\begin{equation} \label{equa 3.26}
\hat{E}_{0,\omega}^{nx}(H(nx))\leq \sum_{z'\in\mathbb{Z}^d}\hat{P}_{0,\omega}^{nx}( H(z')<H(nx))\hat{E}_{z',\omega}^{nx}(\sum_{m=0}^{H(nx)-1}\mathbf{1}_{\{S_m=z'\}}).
\end{equation}
Use (\ref{equab 4}) and the definition of $\mathcal{A}_2(0,y,(S_m)_{m\geq 0})$ given in (\ref{equab 5}):
\begin{equation}\label{equa 3.20}
\hat{E}_{0,\omega}^{nx}(H(nx))\leq D\hat{E}_{0,\omega}^{nx}(|\mathcal{A}_2(0,nx)|).
\end{equation}
Thanks to Lemma 3 in Zerner\cite{Zer98}, we have:
\begin{equation}\label{equa 3.22}
\mathbb{E}\hat{E}_{0,\omega}^{nx}(|\mathcal{A}_2(0,nx)|)\leq \frac{\big( \ln 2d+\int tdF(t)\big)}{-\ln\big(\int e^{-t}dF(t)\big)}|nx|.
\end{equation}
Substitute (\ref{equa 3.8}) in (\ref{equa 3.9}) and take the expectation:
\begin{align}\label{equa 3.21}
\frac{\mathbb{E}(a(0,nx,F*G,\omega))}{n}&\leq \frac{\mathbb{E}(a(0,nx,F,\omega))}{n}+\int tdG(t)\cdot \frac{\mathbb{E}\hat{E}_{0,\omega}^{nx}(H(nx))}{n}\notag\\
&\leq \frac{\mathbb{E}(a(0,nx,F,\omega))}{n}+\int tdG(t)\cdot D \frac{\big( \ln2d+\int tdF(t)\big)}{-\ln\big(\int e^{-t}dF(t)\big)}|x|.
\end{align}
Remark that the last inequality is from (\ref{equa 3.20}) and (\ref{equa 3.22}). Therefore,
$$\alpha_{F*G}(x)\leq \alpha_{F}(x)+c_1(F)f_1(F)\int tdG(t)|x|.$$
where $c_1(F)=D\big( \ln 2d+\int tdF(t)\big)$ and $f_1(F)=\frac{1}{-\ln\big(\int e^{-t}dF(t)\big)}$. Obviously, $c_1(F)$ and $f_1(F)$ above also satisfy  the conditions $(1)$ and $(2)$ of this Proposition. 
\end{proof}
\begin{remark} \label{remb 1}
We use the same argument as in Proposition \ref{propo 3.2} for the annealed path measure to obtain the analogous result about $\beta_{F}$. The constants chosen here are $c_1(F)=D\beta_{F}$ and $f_1(F)=\frac{1}{-\ln\big(\int e^{-t}dF(t)\big)}$.
\end{remark}
\begin{proposition}\label{propo 4.1}
Let $d\geq 3$. For any distribution function $F \in \mathcal{D}_1$ and for all $t_0>0$ such that $p:=\mathbb{P}(V(0)<t_0)<1$, we have:
\begin{equation}\label{equa 4.2}
\limsup_{|y| \to \infty}\frac{\mathbb{E}\hat{E}_{0,\omega}^{y}\Big(\sum_{m=0}^{H(y)-1}\mathbf{1}_{\{V(S_m)<t_0\}}\Big)}{|y|} \leq D(d)\bigg(\ln 2d + \int _{0}^{+\infty}tdF(t)\bigg)\cdot\frac{1}{\ln \frac{1-(1-p)e^{-t_0}}{p}} 
\end{equation}
where $D(d)$ is a constant that depends only on $d$.
\end{proposition}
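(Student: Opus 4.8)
The plan is to mimic the argument behind part (ii) of Theorem~\ref{theorem 1.1} and Proposition~\ref{propo 3.2}, carrying the constraint $\{V(S_m)<t_0\}$ all the way through; morally this replaces the range $\mathcal A_2$ of (\ref{equab 5}) by the set of ``soft'' sites ($V<t_0$) it visits, and replaces the Jensen multiplier $-\ln\int e^{-t}\,dF(t)$ by a $t_0$--dependent one. First I would pass from time spent at soft sites to the number of distinct soft sites visited. Writing $\sum_{m=0}^{H(y)-1}\mathbf{1}_{\{V(S_m)<t_0\}}=\sum_{z:\,V(z)<t_0}\#\{m<H(y):S_m=z\}$ and splitting each trajectory at the first visit $H(z)$, exactly as in (\ref{equab 3})--(\ref{equab 4}), one gets for every $\omega$ and every $y\neq 0$ \[E_0\Big[\sum_{m=0}^{H(y)-1}\mathbf{1}_{\{V(S_m)<t_0\}}e^{-\sum_{m=0}^{H(y)-1}V(S_m)},\,H(y)<\infty\Big]\le D(d)\,E_0\Big[N_{t_0}\,e^{-\sum_{m=0}^{H(y)-1}V(S_m)},\,H(y)<\infty\Big],\] where $N_{t_0}:=\#\{z\in\mathcal A_2(0,y,(S_m)):V(z)<t_0\}$ and $D(d)=\sum_{k\ge 0}P_0(S_k=0)<\infty$ by transience ($d\ge 3$); this uses $\hat E_{z,\omega}^{y}\big(\sum_m\mathbf{1}_{\{S_m=z\}}\big)\le D(d)$ as in (\ref{equab 4}). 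Dividing by $e(0,y,\omega)$ and taking $\mathbb E$ gives $\mathbb E\hat E_{0,\omega}^{y}\big(\sum_m\mathbf{1}_{\{V(S_m)<t_0\}}\big)\le D(d)\,\mathbb E\hat E_{0,\omega}^{y}(N_{t_0})$, so it remains to show $\limsup_{|y|\to\infty}|y|^{-1}\mathbb E\hat E_{0,\omega}^{y}(N_{t_0})\le\big(\ln 2d+\int_0^{\infty}t\,dF(t)\big)\big/\lambda$, where $\lambda:=\ln\frac{1-(1-p)e^{-t_0}}{p}$.

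The heart of the matter is the choice of $\lambda$. In the estimate of $\mathbb E\hat E_{0,\omega}^{y}(|\mathcal A_2|)$ used for (\ref{equa 3.22}) one takes $d_1=-\ln\int e^{-t}\,dF(t)$, chosen so that $\mathbb E[e^{d_1-V(0)}]=1$; here I would instead take the largest $\lambda>0$ with $\mathbb E\big[e^{\lambda\mathbf{1}_{\{V(0)<t_0\}}-V(0)}\big]\le 1$. Using the crude bounds $\mathbb E\big[e^{-V(0)}\mathbf{1}_{\{V(0)<t_0\}}\big]\le p$ and $\mathbb E\big[e^{-V(0)}\mathbf{1}_{\{V(0)\ge t_0\}}\big]\le(1-p)e^{-t_0}$, \[\mathbb E\big[e^{\lambda\mathbf{1}_{\{V(0)<t_0\}}-V(0)}\big]=e^{\lambda}\,\mathbb E\big[e^{-V(0)}\mathbf{1}_{\{V(0)<t_0\}}\big]+\mathbb E\big[e^{-V(0)}\mathbf{1}_{\{V(0)\ge t_0\}}\big]\le e^{\lambda}p+(1-p)e^{-t_0},\] so $\lambda=\ln\frac{1-(1-p)e^{-t_0}}{p}$ does the job, and $\lambda>0$ since $p<1$ and $t_0>0$. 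With this $\lambda$, for any trajectory $S$ with range $\mathcal A_2=\mathcal A_2(0,y,(S_m))$, independence of the potentials yields the ``cancellation'' \[\mathbb E\big[e^{\lambda N_{t_0}}e^{-\sum_{z\in\mathcal A_2}V(z)}\big]=\prod_{z\in\mathcal A_2}\mathbb E\big[e^{\lambda\mathbf{1}_{\{V(z)<t_0\}}-V(z)}\big]\le 1,\] which is the exact analogue of $\mathbb E\big[e^{d_1|\mathcal A_2|}e^{-\sum_{z\in\mathcal A_2}V(z)}\big]\le 1$ used in Zerner's Lemma~3 \cite{Zer98}. Feeding this into that argument, together with $b(0,y)/|y|\le\ln 2d-\ln\int e^{-t}\,dF(t)\le\ln 2d+\int_0^{\infty}t\,dF(t)$ from (\ref{equation 3}), gives $\limsup_{|y|\to\infty}|y|^{-1}\mathbb E\hat E_{0,\omega}^{y}(N_{t_0})\le\big(\ln 2d+\int_0^{\infty}t\,dF(t)\big)/\lambda$, which with the first step proves the proposition.

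The delicate point is this last step \emph{as stated} for $\mathbb E\hat E_{0,\omega}^{y}$ rather than for the annealed measure $\hat{\mathbb E}_0^{y}$. The cancellation immediately controls the annealed quantity: since $e^{-\sum_{m=0}^{H(y)-1}V(S_m)}\le e^{-\sum_{z\in\mathcal A_2}V(z)}$, Fubini and the cancellation give $\hat{\mathbb E}_0^{y}(e^{\lambda N_{t_0}})=e^{b(0,y)}\,\mathbb E\,E_0\big[e^{\lambda N_{t_0}}e^{-\sum_{m=0}^{H(y)-1}V(S_m)},H(y)<\infty\big]\le e^{b(0,y)}$, whence $\lambda\,\hat{\mathbb E}_0^{y}(N_{t_0})\le b(0,y)$ by Jensen. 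Passing from here to the quenched--averaged $\mathbb E\hat E_{0,\omega}^{y}(N_{t_0})$ is precisely what Zerner's Lemma~3 accomplishes for $|\mathcal A_2|$: the obstruction is the factor $1/e(0,y,\omega)$, which may be very large on atypical $\omega$, so one cannot simply move $\mathbb E$ inside the ratio and must invoke Zerner's concentration estimate for the Green's function. This is where the real work lies -- not in the elementary choice of $\lambda$ -- and the loss it incurs is exactly why the numerator is $\ln 2d+\int t\,dF(t)$ and not the sharper $\ln 2d-\ln\int e^{-t}\,dF(t)$. The one thing to check carefully is that Zerner's proof runs verbatim with the additive--over--sites functional $N_{t_0}$ in place of $|\mathcal A_2|$, which should hold since that proof uses only the exponential--moment bound established above.
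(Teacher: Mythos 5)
Your first two steps are exactly right and coincide with the paper: reduce to the number of distinct soft sites visited via transience, and pick $c=\ln\frac{1-(1-p)e^{-t_0}}{p}$ so that $\mathbb{E}\big[\exp(c\mathbf{1}_{\{V(0)<t_0\}}-V(0))\big]\le 1$. But the last paragraph contains a genuine gap, and also a wrong diagnosis of where the numerator $\ln 2d+\int t\,dF(t)$ comes from.

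You correctly observe that the quantity to control is the averaged quenched expectation $\mathbb{E}\hat E_{0,\omega}^{y}(N_{t_0})$ and that the factor $1/e(0,y,\omega)$ is what distinguishes it from the annealed $\hat{\mathbb{E}}_0^y(N_{t_0})$, which you \emph{do} bound by $b(0,y)/c$. But you then assert that passing to the averaged quenched quantity requires ``Zerner's concentration estimate for the Green's function,'' leave that step unexamined, and even attribute the shape of the numerator to a loss in that (unperformed) step. None of this is necessary, and as written your proof stops short of the claim. The paper's argument is elementary: apply Jensen's inequality \emph{first under $\hat E_{0,\omega}^y$} for each fixed $\omega$,
\begin{align}
c\,\hat E_{0,\omega}^{y}(N_{t_0})&\le\ln\hat E_{0,\omega}^{y}\big(e^{cN_{t_0}}\big)\notag\\
&= a(0,y,\omega)+\ln E_0\Big[e^{cN_{t_0}}\exp\big(-\textstyle\sum_{m=0}^{H(y)-1}V(S_m)\big),\,H(y)<\infty\Big],\notag
\end{align}
so the troublesome factor $1/e(0,y,\omega)$ becomes the additive term $a(0,y,\omega)$ after taking logarithms. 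Now take $\mathbb{E}$ and apply Jensen a second time, $\mathbb{E}\ln(\cdot)\le\ln\mathbb{E}(\cdot)$, to the second term; after bounding $e^{-\sum_m V(S_m)}\le e^{-\sum_{z\in\mathcal A_2}V(z)}$, Fubini and independence factor the inner expectation into $\prod_{z\in\mathcal A_2}\mathbb{E}[e^{c\mathbf{1}_{\{V(z)<t_0\}}-V(z)}]\le 1$, and the log of the second term is $\le 0$. One gets directly $c\,\mathbb{E}\hat E_{0,\omega}^{y}(N_{t_0})\le\mathbb{E}[a(0,y,\omega)]$. Finally $\limsup_{|y|\to\infty}\mathbb{E}[a(0,y,\omega)]/|y|\le\sup_{|e|=1}\alpha_F(e)\le\ln 2d+\int_0^\infty t\,dF(t)$, which is where the numerator actually comes from: it is the elementary upper bound on the \emph{quenched} Lyapunov exponent, not a loss from a concentration argument. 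So the idea and the constant $c$ are correct, but the proposal as written has a hole in the final step and misattributes the origin of the prefactor.
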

\begin{proof}
We have as in (\ref{equab 3}):
\begin{align}\label{equa 3.24}
\hat{E}_{0,\omega}^{y}&\Big(\sum_{m=0}^{H(y)-1}\mathbf{1}_{\{V(S_m)<t_0\}}\Big)\notag\\
&=\sum_{z':V(z', \omega)<t_0}\hat{E}_{0,\omega}^{y}\Big(\sum_{m=0}^{H(y)-1}\mathbf{1}_{\{S_m=z'\}}\Big)\notag\\
&=\sum_{z':V(z', \omega)<t_0}\hat{P}_{0,\omega}^{y}( H(z')<H(y))\hat{E}_{z',\omega}^{y}(\sum_{m=0}^{H(y)-1}\mathbf{1}_{\{S_m=z'\}}).
\end{align}
From (\ref{equa 3.24}) and (\ref{equab 4}):
\begin{equation}\label{equa 3.19}
\hat{E}_{0,\omega}^{y}\Big(\sum_{m=0}^{H(y)-1}\mathbf{1}_{\{V(S_m)<t_0\}}\Big)\leq D\hat{E}_{0,\omega}^{y}\Big(\sum_{z\in\mathcal{A}_2(0,y)}\mathbf{1}_{\{V(z)<t_0\}}\Big).
\end{equation}
We recall that $D=\sum_{k=1}^{\infty}E_{z'}(\mathbf{1}_{\{S_k=z'\}})<\infty$ and the definition of $\mathcal{A}_2(0,y,(S_m)_{m\geq 0})$ is given by (\ref{equab 5}).\\
Take  $c=c(t_0,F):=\ln \frac{1-(1-p)e^{-t_0}}{p}$. Note that $c>0$. We use Jensen's inequality and independence as follows:
\begin{align}\label{equa 3.15}
c\mathbb{E}&\hat{E}_{0,\omega}^{y}[\sum_{z\in\mathcal{A}_2(0,y)}\mathbf{1}_{\{V(z)<t_0\}}]\notag\\
&\leq\mathbb{E}\big[\ln\hat{E}_{0,\omega}^{y}[\exp(c\sum_{z\in\mathcal{A}_2(0,y)}\mathbf{1}_{\{V(z)<t_0\}}]\big] \notag \\
&\leq\mathbb{E}\Big[a(0,y,\omega)+\notag\\
&\qquad+\ln E_0\big[\exp\big(c\sum_{z\in\mathcal{A}_2(0,y)}\mathbf{1}_{\{V(z)<t_0\}}-\sum_{z\in\mathcal{A}_2(0,y)}V(z)\big),H(y)<\infty\big]\Big] \notag \\
&\leq \mathbb{E}[a(0,y,\omega)]+\ln E_0\big[\prod_{z \in \mathcal{A}_2(0,y)}\mathbb{E}[\exp(c\mathbf{1}_{\{V(z)<t_0\}}-V(z))]\big].
\end{align}
We remark that:
\begin{align}\label{equa 3.16}
0< \mathbb{E}&[\exp(c\mathbf{1}_{\{V(z)<t_0\}}-V(z))] \notag\\
&\leq \mathbb{E}[\exp (c\mathbf{1}_{\{V(z)<t_0\}}-t_0\mathbf{1}_{\{V(z)\geq t_0\}})] \notag\\
&=\exp(-t_0)\mathbb{E}\big[\exp\big((c+t_0)\mathbf{1}_{\{V(z)<t_0\}}\big)\big] \notag\\
&=\exp(-t_0)\Big(\exp(c+t_0)\mathbb{P}(V(z)<t_0)+\mathbb{P}(V(z)\geq t_0)\Big)\notag\\
&=\exp(c)p+(1-p)\exp(-t_0)=1.
\end{align}
From (\ref{equa 3.15}) and (\ref{equa 3.16}):
\begin{equation}\label{equa 3.25}
\mathbb{E}\hat{E}_{0,\omega}^{y}[\sum_{z\in\mathcal{A}_2(0,y)}\mathbf{1}_{\{V(z)<t_0\}}]\leq \frac{\mathbb{E}[a(0,y,\omega)]}{c}.
\end{equation}
Combine this with (\ref{equa 3.19}), we obtain:
\begin{align}
\limsup_{|y| \to \infty}&\frac{\mathbb{E}\hat{E}_{0,\omega}^{y}\Big(\sum_{m=0}^{H(y)-1}\mathbf{1}_{\{V(S_m)<t_0\}}\Big)}{|y|} \notag\\
&\leq D\limsup_{|y| \to \infty}\frac{\mathbb{E}\hat{E}_{0,\omega}^{y}[\sum_{z\in\mathcal{A}_2(0,y)}\mathbf{1}_{\{V(z)<t_0\}}]}{|y|}\notag\\
&\leq \frac{D}{c}\limsup_{|y|\to\infty}\frac{\mathbb{E}(a(0,y,\omega))}{|y|}\notag\\
&\leq D\sup_{|e|=1}\alpha_{F}(e)\frac{1}{\ln \frac{1-(1-p)e^{-t_0}}{p}}\notag\\
&\leq D \Big(\ln 2d+ \int_{0}^{+\infty}tdF(t)\Big)\frac{1}{\ln \frac{1-(1-p)e^{-t_0}}{p}}.
\end{align}
It therefore implies (\ref{equa 4.2}). 
\end{proof}
\begin{remark}\label{remb 2}
It is easy to obtain an estimation of annealed path measure as in Proposition \ref{propo 4.1}:
\begin{equation}\label{equab 8 }
\limsup_{|y| \to \infty}\frac{\mathbb{\hat{E}}_{0}^{y}\Big(\sum_{m=0}^{H(y)-1}\mathbf{1}_{\{V(S_m)<t_0\}}\Big)}{|y|} \leq D(d)\bigg(\ln 2d - \ln\int_{0}^{+\infty}e^{-t}dF(t)\bigg)\cdot\frac{1}{\ln \frac{1-(1-p)e^{-t_0}}{p}}
\end{equation}
\end{remark}
We use the following corollary to check the condition (iii) of Theorem \ref{theorem 1.2}.
\begin{corollary}\label{corollary 3.1}
Let $d\geq 3$. For any $F \in \mathcal{D}_1$, $x\in\mathbb{Z}^d$, $t_0>0$, there exist $c_2(F)>0$ and $f_2(t_0,F)$ $ >0$ such that:
\begin{itemize}
\item[\rm{(1)}] $c_2(F)\leq c_2(G)$ for all distributions $F, G \in \mathcal{D}_1$ such that $F\geq G$,
\item[\rm{(2)}] $\lim_{n\to\infty}f_2(t_0,F_n)=0$ for $F_n \in \mathcal{D}_1 $ such that $\lim_{n \to \infty}F_n(t_0-)=0$,
\item[\rm{(3)}] $|\alpha_{F^{t_0}}(x)-\alpha_{F}(x)|\leq c_2(F)f_2(t_0,F)|x|$, where the definition of $F^{t_0}$ is given in (\ref{equa 1.10}).
\end{itemize}
\end{corollary}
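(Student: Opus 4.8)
The plan is to verify condition (iii) of Theorem \ref{theorem 1.2} for $\mu=\alpha$ by a coupling argument that reduces the estimate to Proposition \ref{propo 4.1}. Let $V(x)$, $x\in\mathbb{Z}^d$, be i.i.d.\ with distribution function $F$ on $(\Omega,\mathcal{F},\mathbb{P})$ and put $\widetilde V(x):=\max(V(x),t_0)$; then the $\widetilde V(x)$ are i.i.d.\ with distribution function exactly $F^{t_0}$, and $0\le \widetilde V(x)-V(x)=(t_0-V(x))\mathbf{1}_{\{V(x)<t_0\}}\le t_0\mathbf{1}_{\{V(x)<t_0\}}$. Since $F^{t_0}(t)\le F(t)$ for all $t$ (it equals $F$ on $[t_0,\infty)$ and vanishes below $t_0$), the monotonicity statement of Theorem A gives $\alpha_{F^{t_0}}\ge\alpha_{F}$, so $|\alpha_{F^{t_0}}(x)-\alpha_F(x)|=\alpha_{F^{t_0}}(x)-\alpha_F(x)$ and it suffices to bound this difference from above. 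The case $x=0$ is trivial, so assume $x\ne 0$.

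First I would derive a pointwise (in $\omega$ and $n$) inequality, by exactly the algebra of (\ref{equa 3.8}): splitting $\sum_m\widetilde V(S_m)=\sum_m V(S_m)+\sum_m(\widetilde V-V)(S_m)$ and dividing $e(0,nx,F^{t_0},\omega)$ by $e(0,nx,F,\omega)$,
\begin{equation*}
a(0,nx,F^{t_0},\omega)=a(0,nx,F,\omega)-\ln \hat{E}_{0,\omega}^{nx}\Big(\exp\big(-\sum_{m=0}^{H(nx)-1}(\widetilde V-V)(S_m)\big)\Big),
\end{equation*}
where $\hat{E}_{0,\omega}^{nx}$ is the quenched path measure built from $V$. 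Applying Jensen's inequality $-\ln\hat{E}_{0,\omega}^{nx}(e^{-Y})\le \hat{E}_{0,\omega}^{nx}(Y)$ to $Y=\sum_{m=0}^{H(nx)-1}(\widetilde V-V)(S_m)\ge0$, and using $\widetilde V-V\le t_0\mathbf{1}_{\{V<t_0\}}$,
\begin{equation*}
0\le a(0,nx,F^{t_0},\omega)-a(0,nx,F,\omega)\le t_0\,\hat{E}_{0,\omega}^{nx}\Big(\sum_{m=0}^{H(nx)-1}\mathbf{1}_{\{V(S_m)<t_0\}}\Big).
\end{equation*}

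Next I would take $\mathbb{E}$, divide by $n$ and let $n\to\infty$. By Theorem A the middle term tends to $\alpha_{F^{t_0}}(x)-\alpha_F(x)$ (note $F^{t_0}\in\mathcal{D}_1$, since $\int t\,dF^{t_0}(t)=\mathbb{E}\max(V(0),t_0)\le t_0+\int t\,dF(t)<\infty$). On the right the quantity $\frac{1}{n}\mathbb{E}\hat{E}_{0,\omega}^{nx}\big(\sum_{m=0}^{H(nx)-1}\mathbf{1}_{\{V(S_m)<t_0\}}\big)$ equals $|x|$ times $\frac{1}{|nx|}\mathbb{E}\hat{E}_{0,\omega}^{nx}\big(\sum_{m=0}^{H(nx)-1}\mathbf{1}_{\{V(S_m)<t_0\}}\big)$, and $|nx|\to\infty$, so Proposition \ref{propo 4.1} (with $p:=F(t_0-)<1$) gives
\begin{equation*}
\alpha_{F^{t_0}}(x)-\alpha_F(x)\le t_0\,D(d)\Big(\ln 2d+\int_0^{\infty}t\,dF(t)\Big)\frac{1}{\ln\frac{1-(1-p)e^{-t_0}}{p}}\,|x|.
\end{equation*}
Accordingly I would set $c_2(F):=D(d)\big(\ln 2d+\int_0^{\infty}t\,dF(t)\big)$ and $f_2(t_0,F):=t_0\big/\ln\frac{1-(1-p)e^{-t_0}}{p}$ with $p=F(t_0-)$; in the degenerate case $p=1$, where $F^{t_0}$ is the Dirac mass at $t_0$, one instead uses the crude bound $\alpha_{F^{t_0}}(x)\le(\ln 2d+t_0)|x|$ and sets $f_2(t_0,F):=(\ln 2d+t_0)/c_2(F)$, which is harmless since such an $F$ cannot lie in the tail of a sequence with $F_n(t_0-)\to0$. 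Condition (1) holds because $F\ge G$ forces $V$ to be stochastically smaller under $F$, hence $\int t\,dF(t)\le\int t\,dG(t)$ and $c_2(F)\le c_2(G)$; condition (2) holds because $F_n(t_0-)\to0$ makes $\frac{1-(1-p_n)e^{-t_0}}{p_n}\to+\infty$, hence $f_2(t_0,F_n)\to0$; and condition (3) is the displayed inequality together with $\alpha_{F^{t_0}}\ge\alpha_F$.

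The step that needs the most care is the passage to the limit: one must legitimately combine the $n$-limit provided by Theorem A with the $\limsup_{|y|\to\infty}$ bound of Proposition \ref{propo 4.1} along the subsequence $y=nx$, keeping track of the factor $|x|$ produced by $|nx|=n|x|$. Everything else is the same coupling-and-Jensen computation already used in Propositions \ref{propo 3.2} and \ref{propo 4.1}.
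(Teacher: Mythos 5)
Your proof is correct and follows essentially the same route as the paper: the same coupling $\widetilde V=\max(V,t_0)\sim F^{t_0}$, the same ratio-of-expectations algebra followed by Jensen to reduce to $t_0\,\hat{E}^{nx}_{0,\omega}[\sum_m\mathbf{1}_{\{V(S_m)<t_0\}}]$, the same passage to the limit via Theorem A and Proposition \ref{propo 4.1}, and the same choices of $c_2$ and $f_2$. The only addition is your explicit treatment of the degenerate case $p=F(t_0-)=1$, which the paper leaves implicit but which is indeed harmless for the intended application.
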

\begin{proof}
Let $\{V(x)\}_{x\in\mathbb{Z}^d}$ be i.i.d random variables with distribution function $F$. Define:
$$W(x)=V(x)\mathbf{1}_{\{V(x)\geq t_0\}}+t_0\mathbf{1}_{\{ V(x)<t_0\}}.$$
Then $\{W(x)\}_{x\in\mathbb{Z}^d}$ are i.i.d random variables with distribution function $F^{t_0}$. 

For $x\in\mathbb{Z}^d$ and $\omega \in \Omega$, we have:
\begin{align}
a(0,& nx,F^{t_0},\omega)\notag\\
&=-\ln E_{0}[\exp(-\sum_{m=0}^{H(nx)-1}(V(S_m)\mathbf{1}_{\{V(S_m)\geq t_0\}}+t_0\mathbf{1}_{\{ V(S_m)<t_0\}})), H(nx)<\infty]\notag\\
&=-\ln\Big[\frac{E_{0}[\exp(-(\sum_{m=0}^{H(nx)-1}V(S_m)\mathbf{1}_{\{V(S_m)\geq t_0\}}+t_0\mathbf{1}_{\{ V(S_m)<t_0\}})), H(nx)<\infty]}{E_0[\exp(-\sum_{m=0}^{H(nx)-1}V(S_m)), H(nx)<\infty]}\Big]\notag\\
&\qquad +a(0,nx,F,\omega)\notag\\
&\leq -\ln\hat{E}_{0,\omega}^{nx}[\exp(-\sum_{m=0}^{H(nx)-1}t_0\mathbf{1}_{\{V(S_m)<t_0\}})]+a(0,nx,F,\omega)\notag\\
&\leq t_0\hat{E}_{0,\omega}^{nx}[\sum_{m=0}^{H(nx)-1}\mathbf{1}_{\{V(S_m)<t_0\}}]+a(0,nx,F,\omega)\notag.
\end{align}
Take the expectation, we obtain:
\begin{align}\label{equa 3.18}
\frac{\mathbb{E}a(0,nx,F^{t_0},\omega)}{n}&\leq t_0\frac{\mathbb{E}\hat{E}_{0,\omega}^{nx}[\sum_{m=0}^{H(nx)-1}\mathbf{1}_{\{V(S_m)<t_0\}}]}{n}+\frac{\mathbb{E}a(0,nx,F,\omega)}{n}\notag\\
\alpha_{F^{t_0}}(x)&\leq t_0\limsup_{n\to\infty}\frac{\mathbb{E}\hat{E}_{0,\omega}^{nx}[\sum_{m=0}^{H(nx)-1}\mathbf{1}_{\{V(S_m)<t_0\}}]}{n}+\alpha_{F}(x)\notag\\
\alpha_{F^{t_0}}(x)&\leq t_0|x|D\bigg(\ln 2d + \int _{0}^{+\infty}tdF(t)\bigg)\frac{1}{\ln \frac{1-(1-p)e^{-t_0}}{p}}+\alpha_{F}(x).
\end{align}
For the last inequality above, we applied the proposition \ref{propo 4.1}. Recall here $p=F(t_0-)$. Since $F^{t_0}\leq F$, by the monotonicity of Lyapunov exponent, we have: $\alpha_{F^{t_0}}\geq \alpha_{F}$. Combine this with (\ref{equa 3.18}), we obtain:
$$|\alpha_{F^{t_0}}(x)-\alpha_{F}(x)|\leq c_2(F)f_2(t_0,F)|x|,$$
where $c_2(F)=D\bigg(\ln 2d + \int _{0}^{+\infty}tdF(t)\bigg)$ and $f_2(t_0,F)=\frac{t_0}{\ln \frac{1-(1-p)e^{-t_0}}{p}}$. Obviously, $c_2(F)$ and $f_2(t_0,F)$ satisfy the conditions $(1)$ and $(2)$ of this Corollary.
\end{proof}
\begin{remark}\label{remb 3}
The analogous result for the annealed Lyapunov exponent holds with $c_2(F)=D\bigg(\ln 2d - \ln \int _{0}^{+\infty}e^{-t}dF(t)\bigg)$ and $f_2(t_0,F)=\frac{t_0}{\ln \frac{1-(1-p)e^{-t_0}}{p}}$.
\end{remark}
To prove the continuity of Lyapunov exponent, that is $\lim_{n\to\infty}\alpha_{F_n}(x)=\alpha_{F}(x)$ if $F_n\stackrel{w}{\to}F$, we consider first $\limsup \alpha_{F_n}(x)\leq \alpha_{F}(x)$ and then $\liminf \alpha_{F_n}(x) \geq \alpha_{F}(x)$. The inverse of a distribution function $G$ is defined in the usual way, $ G^{-1}(t)=\inf\{u\in \mathbb{R}: G(u)>t\}, t\in \mathbb{R}$. We will need the following lemma from \cite{Cox81}.
\begin{lemma}\label{lemme 4.1} 
If $(F_n)\in \mathcal{D}$ such that $F_n\leq F$ and $F_n \stackrel{w}{\to}F$, then $F_n^{-1}\to F^{-1}$ pointwise on $[0, 1)$.
\end{lemma}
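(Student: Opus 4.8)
The plan is to establish the two one-sided bounds $\liminf_{n\to\infty} F_n^{-1}(t)\ge F^{-1}(t)$ and $\limsup_{n\to\infty} F_n^{-1}(t)\le F^{-1}(t)$ separately for each fixed $t\in[0,1)$; together these give the claimed pointwise convergence. First I would record that $F^{-1}(t)$ is a well-defined real number for $t\in[0,1)$: since $F\in\mathcal D$ we have $F(u)\to 1$ as $u\to\infty$ and $F(u)=0$ for $u<0$, so $\{u:F(u)>t\}$ is a nonempty subset of $[0,\infty)$ with finite infimum, and the same applies to each $F_n$.

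For the lower bound, the hypothesis $F_n\le F$ does all the work: if $F_n(u)>t$ then $F(u)\ge F_n(u)>t$, so $\{u:F_n(u)>t\}\subseteq\{u:F(u)>t\}$, and passing to infima gives $F_n^{-1}(t)\ge F^{-1}(t)$ for every $n$. Taking $\liminf$ over $n$ yields the bound.

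For the upper bound, fix $\epsilon>0$. By definition of the infimum there is $u$ with $F(u)>t$ and $u<F^{-1}(t)+\epsilon/2$. Since a monotone function has at most countably many discontinuities, the continuity points of $F$ are dense, so one may choose a continuity point $v$ of $F$ with $u<v<F^{-1}(t)+\epsilon$; then $F(v)\ge F(u)>t$. Weak convergence $F_n\stackrel{w}{\to}F$ gives $F_n(v)\to F(v)>t$, hence $F_n(v)>t$ for all large $n$, which forces $F_n^{-1}(t)\le v<F^{-1}(t)+\epsilon$ for all large $n$. Therefore $\limsup_{n\to\infty}F_n^{-1}(t)\le F^{-1}(t)+\epsilon$, and letting $\epsilon\downarrow 0$ closes the argument.

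There is no serious obstacle here — this is a standard fact relating weak convergence to quantile functions. The only point needing care is that one cannot invoke weak convergence directly at an arbitrary threshold, so in the upper-bound step one must first slide over to a continuity point $v$ of the limit $F$ before comparing $F_n(v)$ with $F(v)$; and the monotonicity hypothesis $F_n\le F$ is exactly what renders the lower bound immediate rather than only valid at continuity points of $F^{-1}$.
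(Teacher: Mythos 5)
Your proof is correct. The paper itself cites Cox~\cite{Cox81} for this lemma without reproducing a proof, and your argument is the standard one: use the pointwise domination $F_n\le F$ to get the lower bound $F_n^{-1}(t)\ge F^{-1}(t)$ at \emph{every} $t\in[0,1)$, and then exploit density of continuity points of $F$ together with weak convergence for the upper bound. You correctly identify the only delicate point—that without the hypothesis $F_n\le F$, convergence of quantile functions would hold only at continuity points of $F^{-1}$, whereas the one-sided domination upgrades this to all of $[0,1)$.
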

The proof of the following theorem is analogous to the proof of theorem 1.13 in \cite{Cox81}.
\begin{theorem}\label{theorem 4.1}
Let $d\geq 1$. Let $(F_n) \in \mathcal{D}$ such that $F_n\geq G$ for all $n$ and for some $G\in \mathcal{D}_1$. If $ F_n\stackrel{w}{\to}F $, then $\limsup_{n\to\infty}\alpha_{F_n}(x)\leq \alpha_{F}(x)$ for all $x\in\mathbb{Z}^d$.
\end{theorem}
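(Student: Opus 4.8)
The plan is to reduce to the case $F_n\le F$ and then to show, for every fixed $y\in\mathbb Z^d$, that $\mathbb E[a(0,y,F_n,\omega)]\to\mathbb E[a(0,y,F,\omega)]$; infimizing over dilations then gives the claim, along the lines of the proof of Theorem~1.13 in \cite{Cox81}. For the reduction, put $\underline F_n:=\min\{F_n,F\}$: this lies in $\mathcal D$, satisfies $\underline F_n\le F_n$, $\underline F_n\le F$, and $\underline F_n\stackrel{w}{\to}F$. Since $F_n\ge G$ for all $n$ and $F_n\stackrel{w}{\to}F$, letting $n\to\infty$ along continuity points of $F$ and using right-continuity shows $F\ge G$, so $F\in\mathcal D_1$ and $\underline F_n\ge\min\{G,F\}=G\in\mathcal D_1$. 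By the monotonicity of the Lyapunov exponent (Theorem~A), $\alpha_{F_n}\le\alpha_{\underline F_n}$, so it suffices to prove the theorem for $(\underline F_n)$; hence I assume from now on $F_n\le F$, $F_n\ge G\in\mathcal D_1$, $F_n\stackrel{w}{\to}F$.

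On one probability space take i.i.d.\ uniforms $(U(x))_{x\in\mathbb Z^d}$ and set $V(x)=F^{-1}(U(x))$, $V_n(x)=F_n^{-1}(U(x))$, $\widetilde V(x)=G^{-1}(U(x))$; then $V(x)\le V_n(x)\le\widetilde V(x)$ (since $F^{-1}\le F_n^{-1}\le G^{-1}$) and $V_n(x)\to V(x)$ a.s.\ by Lemma~\ref{lemme 4.1}. Fix $y\neq 0$. For a.e.\ $\omega$ the path-space integrand $\exp\!\big(-\sum_{m=0}^{H(y)-1}V_n(S_m,\omega)\big)\mathbf 1_{\{H(y)<\infty\}}$ is bounded by $1$ and converges $P_0$-a.s.\ to the analogous expression with $V$ (a finite product of convergent factors on $\{H(y)<\infty\}$, zero otherwise), so dominated convergence in path space gives $e(0,y,F_n,\omega)\to e(0,y,F,\omega)>0$ and hence $a(0,y,F_n,\omega)\to a(0,y,F,\omega)$. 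To pass the expectation, restrict the walk to a fixed length-$|y|$ nearest-neighbour path from $0$ to $y$: this gives $e(0,y,F_n,\omega)\ge(2d)^{-|y|}\exp\!\big(-\sum_z V_n(z,\omega)\big)$, the sum over the $|y|$ vertices of that path other than $y$, whence
\[
0\le a(0,y,F,\omega)\le a(0,y,F_n,\omega)\le|y|\ln(2d)+\sum_z\widetilde V(z,\omega),
\]
and the right-hand side is integrable because $G\in\mathcal D_1$. A second application of dominated convergence yields $\mathbb E[a(0,y,F_n,\omega)]\to\mathbb E[a(0,y,F,\omega)]$ for each fixed $y$.

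To conclude, by Theorem~A we have $\alpha_{F_n}(x)\le\frac1N\mathbb E[a(0,Nx,F_n,\omega)]$ for every $N\in\mathbb N$; letting $n\to\infty$ gives $\limsup_n\alpha_{F_n}(x)\le\frac1N\mathbb E[a(0,Nx,F,\omega)]$, and taking the infimum over $N$ (Theorem~A again) gives $\limsup_n\alpha_{F_n}(x)\le\alpha_F(x)$. The step I expect to require genuine care is the uniform-in-$n$ domination above: a priori $V_n$ may be large on atypical sites, and the crude bound works precisely because the standing hypothesis $F_n\ge G$ with $G\in\mathcal D_1$ keeps all the $F_n$ under the integrable tail of $G$ --- without such a hypothesis $\mathbb E[a(0,y,F_n,\omega)]$ need not even be finite, let alone convergent. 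The almost-sure convergence and the reduction are routine, and, in contrast with the $\liminf$ direction, this argument uses no transience and hence works for all $d\ge1$.
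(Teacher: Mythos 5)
Your proposal is correct and follows essentially the same route as the paper's proof: reduce to $F_n\le F$ by monotonicity, couple all potentials via a common family of uniforms so that $V\le V_n\le\widetilde V$ with $V_n\to V$ a.s.\ (Lemma~\ref{lemme 4.1}), apply dominated convergence twice (once on path space, once over $\omega$ using $G\in\mathcal D_1$ for the integrable dominant), and conclude via $\alpha_{F_n}(x)=\inf_k\tfrac1k\mathbb E a(0,kx,F_n,\omega)$. The only differences are cosmetic — you make the integrable dominating function explicit and streamline the final infimum step — and you also usefully note that $F\ge G$, hence $F\in\mathcal D_1$, which the paper leaves implicit.
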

\begin{proof}
As mentioned in the Introduction after theorem \ref{theorem 1.2}, there are only two cases to consider. If $F_n \geq F$, by the monotonicity property, $\alpha_{F_n}(x)\leq \alpha_{F}(x)$ for all $n$, hence $\limsup \alpha_{F_n}(x) \leq \alpha_{F}(x)$. For the rest of the proof we shall assume $F_n\leq F$. Let $\xi(x)$, $x\in\mathbb{Z}^d$ be an i.i.d family of uniform random variables on $(0,1)$. Let $V(x):=F^{-1}(\xi(x))$, $V_n(x):=F^{-1}_n(\xi(x))$ and $W(x)=G^{-1}(\xi(x))$. Then $V(x), V_n(x)$ and $W(x)$ are i.i.d. families of, respectively, $F$-distributed, $F_n$-distributed et $G$-distributed random variables. Furthermore, for each $x\in\mathbb{Z}^d$:
$$ V(x)\leq V_n(x)\leq W(x) \mbox{ \hspace{0.1cm} a.s}.  $$ 
and by lemma \ref{lemme 4.1}:
$$ \lim_{n\to\infty}V_n(x)=V(x) \mbox{ \hspace{0.1cm} a.s}.  $$
For $x\in\mathbb{Z}^d$ and for $k\in\mathbb{N}$, we have:
$$ a(0, kx,F, \omega)=-\ln E_0(\exp(-\sum_{m=0}^{H(kx)-1}V(S_m)), H(kx)<\infty). $$
$$ a(0, kx, F_n,\omega)=-\ln E_0(\exp(-\sum_{m=0}^{H(kx)-1}V_n(S_m)), H(kx)<\infty). $$
For a fixed path of the simple random walk, we have:
$$ \exp(-\sum_{m=0}^{H(kx)-1}V_n(S_m))\mathbf{1}_{\{H(kx)<\infty\}} \to \exp(-\sum_{m=0}^{H(kx)-1}V(S_m))\mathbf{1}_{\{H(kx)<\infty\}}, $$
when $n \to \infty $. Furthermore, 
$$ \exp(-\sum_{m=0}^{H(kx)-1}V_n(S_m))\mathbf{1}_{\{H(kx)<\infty\}}<1, $$
for all $n$. By the dominated convergence theorem: $a(0, kx, F_n, \omega) \to a(0, kx, F, \omega)$ when $n\to\infty$. With $k$ and $x$ fixed, we have:
$$a(0, kx, F_n,\omega)\leq a(0, kx, G,\omega), $$
for all $n$ since $V_n(z)\leq W(z)$ for all $n$ and for all $z\in\mathbb{Z}^d$. Moreover:
$$ \mathbb{E}(a(0,kx,G,\omega))<k|x|(\ln 2d+\int t dG(t))<\infty. $$
Apply again the dominated convergence theorem:
$$ \mathbb{E}a(0, kx,F_n,\omega)\stackrel{n\to\infty}{\to}\mathbb{E}a(0, kx, F,\omega). $$
Now fix $\epsilon>0$. Choose $K$ large enough such that:
$$ 0\leq \frac{\mathbb{E}a(0, Kx, F,\omega)}{K}-\alpha_{F}(x)<\epsilon .$$
Now choose $N$ such that for all $n\geq N$:
$$ 0\leq \frac{\mathbb{E}a(0, Kx, F_n,\omega)}{K}-\frac{\mathbb{E}a(0, Kx, F, \omega)}{K}<\epsilon .$$
We have hence for all $n\geq N$:
\begin{align}
0 &\leq \alpha_{F_n}(x)-\alpha_{F}(x) \mbox{ ( Since $F_n\leq F$)} \notag \\
&\leq \frac{\mathbb{E}a(0, Kx, F_n,\omega)}{K}-\alpha_{F}(x) \mbox{ ( Since $\alpha_{F_n}(x)=\inf_{k\geq 1}\frac{\mathbb{E}a(0, kx, F_n,\omega)}{k}$)} \notag \\
& \leq \frac{\mathbb{E}a(0,Kx,F_n,\omega)}{K}- \frac{\mathbb{E}a(0,Kx,F,\omega)}{K}+\frac{\mathbb{E}a(0,Kx,F,\omega)}{K}-\alpha_{F}(x) \notag\\
& \leq \epsilon+\epsilon \notag.
\end{align}
Therefore, $\limsup \alpha_{F_n}(x)\leq \alpha_{F}(x)+2\epsilon $. Now let $\epsilon \to 0$, we have the result.
\end{proof}
\begin{theorem}\label{theoremb 2}
Let $d\geq 1$ and $(F_n)\in \mathcal{D}$ such that $F_n\stackrel{w}{\to}F$, then $\limsup_{n\to\infty}\beta_{F_n}(x)\leq \beta_{F}(x) $ for all $x\in\mathbb{Z}^d$.
\end{theorem}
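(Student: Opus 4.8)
The plan is to mimic the argument for Theorem \ref{theorem 4.1}, exploiting the fact that, unlike the quenched weight $a(0,nx,\omega)=-\ln e(0,nx,\omega)$, the annealed weight $e(0,nx,\omega)$ is automatically $\le 1$. This is what lets dominated convergence go through \emph{without} any minorant $G\in\mathcal{D}_1$, and hence why no finite-mean hypothesis is required here. As a preliminary reduction, set $\underline{F}_n(t)=\min\{F_n(t),F(t)\}$; then $\underline{F}_n\in\mathcal{D}$, $\underline{F}_n\le F_n$, $\underline{F}_n\le F$, and $\underline{F}_n\stackrel{w}{\to}F$. By the monotonicity in Theorem B, $\beta_{F_n}\le\beta_{\underline{F}_n}$, so it suffices to bound $\limsup_n\beta_{\underline{F}_n}(x)$; renaming, we may and do assume $F_n\le F$ for all $n$.

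Next I would couple the potentials by quantiles, as in \cite{Cox81}: let $\{\xi(z)\}_{z\in\mathbb{Z}^d}$ be i.i.d.\ uniform on $(0,1)$ and put $V(z)=F^{-1}(\xi(z))$, $V_n(z)=F_n^{-1}(\xi(z))$, so that $V(z)\le V_n(z)$ a.s.\ and, by Lemma \ref{lemme 4.1}, $V_n(z)\to V(z)$ a.s. Fix $x\neq 0$ and an integer $K\ge 1$. On the event $\{H(Kx)<\infty\}$ the walk visits only finitely many sites before $H(Kx)$, so for $(\mathbb{P}\times P_0)$-almost every pair (environment, path) the quantity $\exp(-\sum_{m=0}^{H(Kx)-1}V_n(S_m))\mathbf{1}_{\{H(Kx)<\infty\}}$ converges to $\exp(-\sum_{m=0}^{H(Kx)-1}V(S_m))\mathbf{1}_{\{H(Kx)<\infty\}}$, while staying $\le 1$ throughout. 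Dominated convergence on $\Omega\times(\text{path space})$ then yields $\mathbb{E}\,e(0,Kx,F_n,\omega)\to\mathbb{E}\,e(0,Kx,F,\omega)$; since $b(0,Kx,F)<\infty$ by (\ref{equation 3}), the limit is strictly positive, so $b(0,Kx,F_n)\to b(0,Kx,F)$ as $n\to\infty$.

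The proof is then closed by the standard $\varepsilon$-argument. Given $\varepsilon>0$, use Theorem B to pick $K$ with $\frac{1}{K}b(0,Kx,F)<\beta_F(x)+\varepsilon$, then $N$ with $\frac{1}{K}b(0,Kx,F_n)<\frac{1}{K}b(0,Kx,F)+\varepsilon$ for $n\ge N$; since $\beta_{F_n}(x)\le\frac{1}{K}b(0,Kx,F_n)$ this gives $\beta_{F_n}(x)<\beta_F(x)+2\varepsilon$ for $n\ge N$, whence $\limsup_n\beta_{F_n}(x)\le\beta_F(x)+2\varepsilon$, and letting $\varepsilon\downarrow 0$ finishes.

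I do not expect a genuine obstacle here: this is the easy half, exactly as in the quenched case. The only delicate points are the interchange of $\lim_n$ with the two expectations $\mathbb{E}$ and $E_0$ — trivial thanks to the uniform bound $e\le 1$, which is precisely what removes the finite-mean hypothesis of Theorem \ref{theorem 4.1} — and checking that the argument of $-\ln$ does not degenerate to $0$, both handled above. A variant that avoids the quantile coupling altogether, and even the preliminary reduction to $F_n\le F$, is to write $\mathbb{E}\,e(0,Kx,F_n,\omega)=E_0\big[\prod_{z}\int e^{-\ell_z t}\,dF_n(t),\;H(Kx)<\infty\big]$, where $\ell_z$ is the number of visits to $z$ before $H(Kx)$, and to let weak convergence pass through each of the finitely many nontrivial bounded continuous factors $t\mapsto e^{-\ell_z t}$.
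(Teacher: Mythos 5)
Your proof is correct and follows the strategy the paper indicates: it adapts the proof of Theorem \ref{theorem 4.1} to the annealed case, correctly identifying that a single dominated-convergence application on $\Omega\times(\text{path space})$ with the uniform bound $e\le 1$ gives $\mathbb{E}\,e(0,Kx,F_n,\omega)\to\mathbb{E}\,e(0,Kx,F,\omega)>0$, hence $b(0,Kx,F_n)\to b(0,Kx,F)$ without any finite-mean (or $G$-majorant) hypothesis, after which the $\varepsilon$-argument closes as before. The weak-convergence variant you sketch at the end (factoring over local times $\ell_z$) is a clean rephrasing of the same computation rather than a genuinely different route.
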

The proof of this theorem is very similar to the theorem \ref{theorem 4.1}. We only want to remark that $b(0,kx,F_n)$ always converge to $b(0,kx,F)$ with $k$ and $x$ fixed when $n \to \infty$ (without the condition of finite mean). \\
\textbf{Proof of theorem \ref{theorem 1.3}:}\\
Case $d=1$. Thanks to Proposition 10 in Zerner\cite{Zer98} and the proof of theorem \ref{theorem 4.1}, we have for all $m\in\mathbb{N}$: 
$$\alpha_{F}(m)=\mathbb{E}(a(0,m,F))=\lim_{n\to\infty}\mathbb{E}(a(0,m,F_n))=\lim_{n\to\infty}\alpha_{F_n}(m),$$

Case $d\geq 3$. By the monotonicity property of Lyapunov exponent, proposition \ref{propo 3.2} and corollary \ref{corollary 3.1}, we see that all the conditions of theorem \ref{theorem 1.2} in the Introduction are satisfied, we have then $\liminf_{n \to \infty}\alpha_{F_n}(x)\geq \alpha_{F}(x)$. Combine this with  $\limsup_{n\to\infty}\alpha_{F_n}(x)\leq \alpha_{F}(x)$ given by theorem \ref{theorem 4.1} to obtain finally theorem \ref{theorem 1.3}.

Case $d=2$. First, we note that if there exists $\lambda>0$ such that $V\geq \lambda$, then $\hat{E}_{z',\omega}^{y}(\sum_{m=0}^{H(y)-1}\mathbf{1}_{\{S_m=z'\}})$ $ \leq D(\lambda)<\infty$ for all $z' \in \mathbb{Z}^d$. Indeed, from (\ref{equab 4}), we have:
\begin{align}\label{equab 10}
\hat{E}_{z',\omega}^{y}(\sum_{m=0}^{H(y)-1}\mathbf{1}_{\{S_m=z'\}})&\leq 1+ \sum_{k=1}^{+\infty}E_{z'}\big(\mathbf{1}_{\{S_k=z'\}} \exp(-\sum_{m=0}^{k-1}V(S_m))\big)\notag\\
&\leq \sum_{k=0}^{+\infty}\exp(-k \lambda)=\frac{1}{1-\exp(-\lambda)} <\infty
\end{align}
With (\ref{equab 10}), we can now repeat all argument used in the case $d\geq 3$ to show that  Proposition \ref{propo 3.2}, Corollary \ref{corollary 3.1} and then Theorem \ref{theorem 1.3} also hold when $d=2$.

In first passage percolation, \cite{Wier78} truncates the distribution function below and above at $t_0>0$ and shows the continuity of time constant in these two cases. By the theorem \ref{theorem 1.3}, we obtain the similar results for Lyapunov exponent as its corollaries.
\begin{corollary}
Let $F^{t_0}$ be the distribution function obtained by truncating $F$ below at $t_0>0$(see (\ref{equa 1.10}) for the definition). Then, if $F \in \mathcal{D}_1$, for all $x\in\mathbb{Z}^d$ :
\begin{equation}
\lim_{t_0\rightarrow 0}\alpha_{F^{t_0}}(x)=\alpha_{F}(x).
\end{equation}
\end{corollary}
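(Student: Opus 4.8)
The plan is to deduce the statement from Corollary \ref{corollary 3.1} together with an elementary analysis of its error term. Fix $t_0>0$ and realize $F^{t_0}$ as the law of $W(x)=V(x)\mathbf 1_{\{V(x)\ge t_0\}}+t_0\mathbf 1_{\{V(x)<t_0\}}$, exactly as in the proof of that corollary. Then $\mathbb E[W(0)]\le\mathbb E[V(0)]+t_0<\infty$, so $F^{t_0}\in\mathcal D_1$, and $F^{t_0}(t)\le F(t)$ for all $t$, so $F^{t_0}\le F$; by the monotonicity of the quenched Lyapunov exponent (Theorem A) this already gives $\alpha_{F^{t_0}}(x)\ge\alpha_F(x)$. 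Hence it suffices to bound $\alpha_{F^{t_0}}(x)-\alpha_F(x)$ from above and to show this bound vanishes as $t_0\to 0$.

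For the upper bound I would invoke Corollary \ref{corollary 3.1} directly:
\begin{equation*}
0\le\alpha_{F^{t_0}}(x)-\alpha_F(x)\le c_2(F)\,f_2(t_0,F)\,|x|,\qquad c_2(F)=D\Bigl(\ln 2d+\int_0^{+\infty}t\,dF(t)\Bigr),
\end{equation*}
where $c_2(F)<\infty$ is independent of $t_0$ and $f_2(t_0,F)=t_0/\ln\frac{1-(1-p)e^{-t_0}}{p}$ with $p=F(t_0-)$. Everything then comes down to showing $f_2(t_0,F)\to 0$ as $t_0\to 0^+$. Since $F\in\mathcal D$ puts no mass on $]-\infty,0[$ we have $F(0-)=0$, hence $p=F(t_0-)\to 0$; and when $p=0$ there is nothing to prove, because then $F^{t_0}=F$, so we may assume $0<p<1$. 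Writing the denominator as $-t_0+\ln\!\bigl(1+\frac{e^{t_0}-1}{p}\bigr)$ and using $e^{t_0}-1\ge t_0$ together with $\ln(1+r)\ge(\ln 2)\min(r,1)$ for $r\ge 0$, one gets
\begin{equation*}
\ln\frac{1-(1-p)e^{-t_0}}{p}\ \ge\ -t_0+(\ln 2)\,\min\!\Bigl(\tfrac{t_0}{p},1\Bigr);
\end{equation*}
since $t_0/\min(t_0/p,1)=\max(p,t_0)$, this yields, for every $t_0$ small enough that $\max(p,t_0)\le\tfrac12\ln 2$,
\begin{equation*}
f_2(t_0,F)\ \le\ \frac{t_0}{(\ln 2)\min(t_0/p,1)-t_0}\ \le\ \frac{2}{\ln 2}\,\max\bigl(F(t_0-),t_0\bigr),
\end{equation*}
which tends to $0$ as $t_0\to 0$. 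Combined with the two-sided estimate, this gives $\lim_{t_0\to 0}\alpha_{F^{t_0}}(x)=\alpha_F(x)$.

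The only genuinely delicate point is this last estimate: $t_0$ and $p=F(t_0-)$ may tend to $0$ at unrelated rates, so a naive Taylor expansion of $f_2(t_0,F)$ is not available, and one has to bound the denominator from below in a manner valid simultaneously in the regimes $t_0\ll p$, $t_0\asymp p$ and $t_0\gg p$ — phrasing the final bound through $\max(p,t_0)$ is what makes it uniform. (In low dimension the statement is either trivial — when the potentials are bounded below, $F^{t_0}=F$ for all small $t_0$ — or, for $d=1$, follows from the identity $\alpha_F(m)=\mathbb E[a(0,m,F)]$ of Zerner by dominated convergence, dominating $a(0,m,F^{t_0})$ by $a(0,m,F^{s})$ for a fixed $s>t_0$.)
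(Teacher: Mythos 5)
Your argument is not the paper's. The paper deduces the corollary directly from Theorem~\ref{theorem 1.3}: one checks that $F^{t_0}\stackrel{w}{\to}F$ as $t_0\to 0$, that $F^{t_0}\in\mathcal{D}_1$, and that $F^{t_0}\geq F^{1}=:G\in\mathcal{D}_1$ for all $t_0\leq 1$, and then the hypotheses of Theorem~\ref{theorem 1.3} are met. You instead try to extract the limit from the explicit error bound in Corollary~\ref{corollary 3.1}, which is a genuinely different route.

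Unfortunately there is a real gap. You claim that $p=F(t_0-)\to 0$ as $t_0\to 0^+$, deducing it from $F(0-)=0$. This does not follow: since $F$ is right-continuous and supported on $[0,\infty)$, $F(t_0-)=\mathbb{P}(V<t_0)\to\mathbb{P}(V\le 0)=\mathbb{P}(V=0)=F(0)$ as $t_0\downarrow 0$, and $F\in\mathcal{D}_1$ only requires $F(0)<1$, not $F(0)=0$. So when the potential has an atom at $0$, say $F(0)=q\in(0,1)$, one has $p\to q$ and a direct expansion gives
\begin{equation*}
c(t_0,F)=\ln\frac{1-(1-p)e^{-t_0}}{p}=\frac{t_0(1-p)}{p}+O(t_0^2),\qquad f_2(t_0,F)=\frac{t_0}{c(t_0,F)}\longrightarrow\frac{q}{1-q}>0,
\end{equation*}
so the right-hand side $c_2(F)f_2(t_0,F)|x|$ converges to a strictly positive constant rather than to $0$. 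Your final estimate $f_2(t_0,F)\le \frac{2}{\ln 2}\max(F(t_0-),t_0)$ is correct as an inequality, but $\max(F(t_0-),t_0)\to F(0)$, and it only goes to zero if $F(0)=0$. Thus the argument proves the corollary only under the additional hypothesis $\mathbb{P}(V(0)=0)=0$. The estimate of Corollary~\ref{corollary 3.1} is simply too crude for the regime $t_0\to 0$ when an atom at the origin is present (its proof pays a factor of order $1/c(t_0,F)\sim p/t_0$ when passing from the expected number of low sites to $\mathbb{E}a(0,y,\omega)$, which cancels the favourable factor $t_0$), and that regime is precisely what Cox's abstract Theorem~\ref{theorem 1.2} is designed to circumvent: it uses condition (iii) only for a \emph{fixed} $t_0$ as $F_n\to F$, never for $t_0\to 0$. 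To patch your proof you would need to invoke Theorem~\ref{theorem 1.3} — at which point you might as well use the paper's one-line deduction. (Your remarks about $d\le 2$ are fine.)
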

\begin{corollary}
Let $^{t_0}F$ be the distribution function obtained by truncating above at $t_0>0$, i.e:
\begin{equation}\label{equa 4.3}
^{t_0}F(t):=
\begin{cases}
F(t) \mbox{ if $t<t_0$}\\
1 \mbox{ if $t\geq t_0$},
\end{cases}
\end{equation}
Then, if $F \in \mathcal{D}_1$, for all $x\in \mathbb{Z}^{d}$:
\begin{equation}
\lim_{t_0\to\infty}\alpha_{^{t_0}F}(x)=\alpha_{F}(x).
\end{equation}
\end{corollary}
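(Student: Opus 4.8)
The plan is to obtain this corollary as an immediate application of Theorem~\ref{theorem 1.3}, with the approximating sequence taken to be $F_n := {}^{t_n}F$ for an arbitrary sequence $t_n \uparrow \infty$.

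First I would record the elementary properties of the upper truncation. If $V \sim F$, then a variable with distribution function ${}^{t_0}F$ has the law of $\min(V,t_0)$; consequently ${}^{t_0}F(t) \geq F(t)$ for every $t$, and $\int t\,d({}^{t_0}F)(t) = \mathbb{E}[\min(V,t_0)] \leq \mathbb{E}[V] < \infty$, so ${}^{t_0}F \in \mathcal{D}_1 \subset \mathcal{D}$ (and ${}^{t_0}F(0) = F(0) < 1$ since $t_0 > 0$). Moreover, for any sequence $t_n \uparrow \infty$ and any continuity point $t$ of $F$ one has ${}^{t_n}F(t) = F(t)$ as soon as $t_n > t$, hence ${}^{t_n}F \stackrel{w}{\to} F$. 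In particular, by the monotonicity in Theorem~A, ${}^{t_n}F \geq F$ forces $\alpha_{{}^{t_n}F}(x) \leq \alpha_F(x)$, so the $\limsup$ half is free.

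Next I would verify the hypotheses of Theorem~\ref{theorem 1.3} for $F_n = {}^{t_n}F$. The required common minorant in $\mathcal{D}_1$ can simply be taken to be $G = F$ itself: $F \in \mathcal{D}_1$ by assumption and $F \leq {}^{t_n}F = F_n$ for all $n$ by the domination above. When $d = 2$, Theorem~\ref{theorem 1.3} additionally requires a constant $\lambda > 0$ with $F_n(\lambda) = 0$ for all $n$; since ${}^{t_n}F(\lambda) = F(\lambda)$ once $t_n > \lambda$, this holds for all large $n$ as soon as $F$ is bounded below, which is exactly the $d = 2$ proviso already present in Theorem~\ref{theorem 1.3} (and discarding the finitely many initial indices does not affect the limit). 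Theorem~\ref{theorem 1.3} then yields $\lim_{n\to\infty}\alpha_{{}^{t_n}F}(x) = \alpha_F(x)$ for every $x \in \mathbb{Z}^d$, and since $t_n \uparrow \infty$ was arbitrary this is precisely $\lim_{t_0\to\infty}\alpha_{{}^{t_0}F}(x) = \alpha_F(x)$.

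I do not expect a genuine obstacle here: all of the analytic work is already contained in Theorem~\ref{theorem 1.3}. The only points that need a little care are the uniform finite-mean requirement — handled by the observation that $G = F$ itself can serve as the minorant — and, in dimension two, the boundedness-below condition, which is the reason the corollary must be read under the same $d = 2$ hypothesis as Theorem~\ref{theorem 1.3}. If one wanted to avoid quoting the full statement of Theorem~\ref{theorem 1.3}, one could instead combine the (trivial) $\limsup$ bound above with $\liminf_n \alpha_{{}^{t_n}F}(x) \geq \alpha_F(x)$, the latter following from Theorem~\ref{theorem 1.2} applied through Proposition~\ref{propo 3.2} and Corollary~\ref{corollary 3.1}; but invoking Theorem~\ref{theorem 1.3} directly is the cleanest route.
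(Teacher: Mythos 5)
Your proof is correct and matches the intent of the paper, which states both truncation corollaries as immediate consequences of Theorem~\ref{theorem 1.3} without spelling out the details. Your choice of $G=F$ as the common minorant in $\mathcal{D}_1$, together with the observation that ${}^{t_n}F\geq F$ and ${}^{t_n}F\stackrel{w}{\to}F$ as $t_n\uparrow\infty$, is precisely the verification the paper leaves implicit.
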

We now consider the annealed Lyapunov exponent. The following theorem parallels lemma 2 of \cite{Cox-Kes81} and theorem 7.12 of \cite{Wier78} in the context of first passage percolation. This is a tool to eliminate the condition of finite mean.  As in \cite{Flury06}, we define for $z\in\mathbb{Z}^d, n \in\mathbb{N}$ the number of visits to the site $z$ by the random walk up to time $n$:
$$\ell_z(n):=|\{m\in\mathbb{N}_0: m<n, S_m=z\}|.$$
This notation is useful in the proof of theorem \ref{theoremb 3}.
\begin{theorem}\label{theoremb 3} 
Let $d\geq 3$. Let $F\in \mathcal{D}$ such that $F$ assigns probability $1$ to $[0,+\infty[$. Then for all $x\in \mathbb{Z}^d$:
\begin{equation}\label{equation 16}
\lim_{t_0\to\infty}\beta_{^{t_0}F}(x)=\beta_{F}(x),
\end{equation}
where $^{t_0}F$ is defined in (\ref{equa 4.3}).
\end{theorem}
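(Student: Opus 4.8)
The plan is to combine a monotonicity reduction with a local-time identity that replaces $V$ by its truncation ${}^{t_0}V$ at the cost of the \emph{annealed} expected size of the visited lattice animal, and then to control that size through the moment-free estimate \eqref{equab 7}.

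\textbf{Reduction.} First I would set ${}^{t_0}V:=V\wedge t_0$, which is an i.i.d.\ potential with law ${}^{t_0}F$. Since ${}^{t_0}F$ has bounded support it lies in $\mathcal D_1$, and ${}^{t_0}F\ge F$, so Theorem~B gives $\beta_{{}^{t_0}F}\le\beta_F$, i.e.\ $b(0,y,{}^{t_0}F)\le b(0,y,F)$ for all $y$. Abbreviating $b_{t_0}(0,y):=b(0,y,{}^{t_0}F)$, it is then enough to find $M=M(d,F)$ and a function $r(t_0)$ with $r(t_0)\to0$ as $t_0\to\infty$ such that
\[
0\le b(0,y,F)-b_{t_0}(0,y)\le M\,r(t_0)\,|y|\qquad\text{for all }y\ne0\text{ and all large }t_0;
\]
dividing by $n$ along $y=nx$ and using \eqref{equation 2} yields $0\le\beta_F(x)-\beta_{{}^{t_0}F}(x)\le M\,r(t_0)|x|$, and letting $t_0\to\infty$ finishes the proof.

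\textbf{Main argument.} For the displayed inequality I would proceed as follows. With $\ell_z(n)$ the number of visits of $S$ to $z$ before time $n$, we have $\sum_{m=0}^{H(y)-1}V(S_m)=\sum_z\ell_z(H(y))V(z)$, so by Fubini and the independence of $\{V(z)\}$, $\mathbb E[e(0,y,\omega)]=E_0[\prod_z\phi(\ell_z(H(y))),H(y)<\infty]$ with $\phi(k):=\int e^{-kt}\,dF(t)$, and likewise with $\phi$ replaced by $\phi_{t_0}(k):=\int e^{-k(t\wedge t_0)}\,dF(t)$ for ${}^{t_0}F$; only the finitely many $z\in\mathcal A_2(0,y)=\{z:H(z)<H(y)\}$ matter, since $\phi(0)=\phi_{t_0}(0)=1$. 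Taking the ratio and recognising the normalising constant of the annealed measure for ${}^{t_0}F$,
\[
e^{-(b(0,y,F)-b_{t_0}(0,y))}=\frac{\mathbb E[e(0,y,\omega)]}{\mathbb E[e(0,y,{}^{t_0}F,\omega)]}=\hat{\mathbb E}_{0,{}^{t_0}F}^{\,y}\Big[\prod_{z\in\mathcal A_2(0,y)}\frac{\phi(\ell_z(H(y)))}{\phi_{t_0}(\ell_z(H(y)))}\Big],
\]
and since $0<\phi\le\phi_{t_0}$, Jensen's inequality for the convex function $u\mapsto e^{-u}$ gives
\[
b(0,y,F)-b_{t_0}(0,y)\le\hat{\mathbb E}_{0,{}^{t_0}F}^{\,y}\Big[\sum_{z\in\mathcal A_2(0,y)}\ln\frac{\phi_{t_0}(\ell_z(H(y)))}{\phi(\ell_z(H(y)))}\Big].
\]
Next I would bound the summand uniformly: for $k\ge1$, $\phi_{t_0}(k)-\phi(k)=\int_{(t_0,\infty)}(e^{-kt_0}-e^{-kt})\,dF(t)\le e^{-kt_0}$, and, fixing any $b_0\ge0$ with $c_0:=F(b_0)>0$, $\phi(k)\ge c_0e^{-kb_0}$, so $\ln(\phi_{t_0}(k)/\phi(k))\le(\phi_{t_0}(k)-\phi(k))/\phi(k)\le c_0^{-1}e^{-(t_0-b_0)}=:r(t_0)$ for all $k\ge1$ and $t_0\ge b_0$, with $r(t_0)\to0$; hence the bracket is $\le r(t_0)\,|\mathcal A_2(0,y)|$. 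Finally, \eqref{equab 7} (the argument of Zerner's Lemma~3, which needs no moment assumption), applied to ${}^{t_0}F$, gives $(-\ln\phi_{t_0}(1))\,\hat{\mathbb E}_{0,{}^{t_0}F}^{\,y}(|\mathcal A_2(0,y)|)\le b_{t_0}(0,y)$, which together with \eqref{equation 3} for ${}^{t_0}F$ yields $\hat{\mathbb E}_{0,{}^{t_0}F}^{\,y}(|\mathcal A_2(0,y)|)\le(1+\ln 2d/(-\ln\phi_{t_0}(1)))|y|$; since $F\in\mathcal D$ forces $\phi(1)\in(0,1)$ and $\phi_{t_0}(1)\downarrow\phi(1)$, the right side is $\le M(d,F)|y|$ for $t_0$ large, which is exactly the bound sought.

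\textbf{Main difficulty.} The key point is this last input. One has to observe that the bound on the \emph{annealed} expected lattice-animal size $\hat{\mathbb E}_0^y(|\mathcal A_2(0,y)|)$ provided by \eqref{equab 7} requires no finiteness of $\int t\,dF(t)$, and that, applied to ${}^{t_0}F$, it stays comparable to $|y|$ uniformly in $t_0$ even though $\int(t\wedge t_0)\,dF(t)\uparrow+\infty$; this uniformity is precisely what lets the truncation argument reach an arbitrary $F\in\mathcal D$. Everything else is routine, the only small care being to keep the bound on $\phi_{t_0}(k)/\phi(k)$ uniform in $k\ge1$, obtained by peeling off the factor $e^{-kt_0}$ and dividing by the exponential lower bound $\phi(k)\ge c_0e^{-kb_0}$.
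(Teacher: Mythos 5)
Your argument is correct, and it takes a genuinely different route from the paper. The paper introduces the tail-distribution $^{t_0}\hat{F}$, observes that $^{t_0}F * {^{t_0}\hat{F}} \leq F \leq {^{t_0}F}$, and then compares $\beta_{^{t_0}F*^{t_0}\hat{F}}$ with $\beta_{^{t_0}F}$ via the bound
$-\ln \hat{\mathbb E}_{0,^{t_0}F}^{nx}\big(\mathbb E(e^{-U_{t_0}(0)})^{H(nx)}\big)\leq (-\ln\mathbb E e^{-U_{t_0}(0)})\,\hat{\mathbb E}_{0,^{t_0}F}^{nx}(H(nx))$, where the last factor is controlled via part~(ii) of Theorem~\ref{theorem 1.1}, i.e.\ through the hitting time $H(nx)$ and the Green's-function constant $D(d)$ coming from transience. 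You instead compare $b(0,y,F)$ and $b(0,y,{^{t_0}F})$ directly through the exact local-time identity
$e^{-(b(0,y,F)-b_{t_0}(0,y))}=\hat{\mathbb E}_{0,^{t_0}F}^{\,y}\big[\prod_{z\in\mathcal A_2}\phi(\ell_z)/\phi_{t_0}(\ell_z)\big]$,
Jensen, and a uniform-in-$k$ bound $\ln(\phi_{t_0}(k)/\phi(k))\le r(t_0)$ for $k\ge1$, so that the error is $\le r(t_0)\,\hat{\mathbb E}_{0,^{t_0}F}^{\,y}(|\mathcal A_2|)$. Both approaches feed into the same moment-free annealed animal-size estimate \eqref{equab 7}, but you bypass the hitting time $H(y)$ and the constant $D$ altogether: your estimate $\hat{\mathbb E}_{0,^{t_0}F}^{\,y}(|\mathcal A_2|)\le M(d,F)|y|$ is dimension-independent, so your proof does not actually use $d\ge3$ and would equally yield the statement (and hence Theorem~\ref{theoremb 1} modulo the other ingredients) for $d=1,2$ without any extra lower-boundedness hypothesis. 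In that sense your argument is not only correct but slightly more streamlined and more general than the paper's; the only small care to note is the one you already flag, namely picking $b_0$ with $c_0=F(b_0)>0$ (possible since $F$ is a proper distribution) so that $\phi(k)\ge c_0e^{-kb_0}$ and the ratio bound is uniform over $k\ge1$.
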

\begin{proof}
Let $V_1(x)$ and $V_2(x)$, $x \in \mathbb{Z}^{d}$ be two families of i.i.d random potentials  with distribution $F$, independent of one another. Then, $W_{t_0}(x):=\min\{V_1(x);t_0\}$, $x\in\mathbb{Z}^d$ are i.i.d random potentials  with distribution function $^{t_0}F$.
Define a distribution function $^{t_0}\hat{F}$ by:
\begin{equation} \label{equa 3.13}
^{t_0}\hat{F}:=
\begin{cases}
0 \mbox{ if $t<0$}\\
F(t_0) \mbox{ if $0\leq t \leq t_0$}\\
F(t) \mbox{ if $t>t_0$}
\end{cases}
\end{equation}
Take $U_{t_0}(x)=V_2(x)\mathbf{1}_{\{V_2(x)>t_0\}}$. Hence $U_{t_0}(x), x\in \mathbb{Z}^d$ is an i.i.d family of random potentials with distribution function $^{t_0}\hat{F}$. $(W_{t_0}+U_{t_0})(x), x\in \mathbb{Z}^d$ are i.i.d random potentials with distribution function $^{t_0}F*^{t_0}\hat{F}$. Moreover,
\begin{align}\label{equation 17}
b&(0,nx,^{t_0}F*^{t_0}\hat{F})\notag\\
&=-\ln \mathbb{E} E_0[\exp (-\sum_{m=0}^{H(nx)-1}W_{t_0}(S_m)-\sum_{m=0}^{H(nx)-1}U_{t_0}(S_m)), H(nx)<\infty] \notag \\
&=-\ln \Big[\frac{ \mathbb{E} E_0\big[\exp (-\sum_{m=0}^{H(nx)-1}W_{t_0}(S_m)-\sum_{m=0}^{H(nx)-1}U_{t_0}(S_m)), H(nx)<\infty\big]}{\mathbb{E}e(0,nx,\omega,^{t_0}F)}\notag\\
&\hspace{2cm} \cdot \mathbb{E}e(0,nx,\omega,^{t_0}F) \Big]\notag \\
&=-\ln \frac{ \mathbb{E} E_0\big[\exp (-\sum_{m=0}^{H(nx)-1}W_{t_0}(S_m)-\sum_{m=0}^{H(nx)-1}U_{t_0}(S_m)), H(nx)<\infty\big]}{\mathbb{E}e(0,nx,\omega,^{t_0}F)}\notag\\
&\hspace{2cm}+b(0,nx,^{t_0}F).
\end{align}
Since two sequences $(W_{t_0}(x))_{x\in\mathbb{Z}^d}$ and $(U_{t_0}(x))_{x\in\mathbb{Z}^d}$ independent of each other, the first term in right hand side of (\ref{equation 17}) is equal to:
\begin{align}\label{equation 18}
&-\ln\frac{ E_{0}\Big(\mathbb{E}[\exp(-\sum_{m=0}^{H(nx)-1}U_{t_0}(S_m))]\mathbb{E}[\exp (-\sum_{m=0}^{H(nx)-1}W_{t_0}(S_m))], H(nx)<\infty\Big)}{\mathbb{E}e(0,nx,\omega,^{t_0}F)}\notag\\
&=-\ln\frac{ E_{0}\Big(\mathbb{E}[\exp\big(-\sum_{z\in\mathbb{Z}^d}\ell_z(H(nx))U_{t_0}(z)\big)]\mathbb{E}[\exp (-\sum_{m=0}^{H(nx)-1}W_{t_0}(S_m))], H(nx)<\infty\Big)}{\mathbb{E}e(0,nx,\omega,^{t_0}F)}\notag\\
&=-\ln\frac{ E_{0}\Big(\prod_{z\in\mathbb{Z}^d}\mathbb{E}[\exp\big(-\ell_z(H(nx))U_{t_0}(z)\big)]\mathbb{E}[\exp (-\sum_{m=0}^{H(nx)-1}W_{t_0}(S_m))], H(nx)<\infty\Big)}{\mathbb{E}e(0,nx,\omega,^{t_0}F)}\notag\\
&\leq -\ln\frac{ E_{0}\Big(\prod_{z\in\mathbb{Z}^d}[\mathbb{E}\exp(-U_{t_0}(z))]^{\ell_z(H(nx))}\mathbb{E}[\exp (-\sum_{m=0}^{H(nx)-1}W_{t_0}(S_m))], H(nx)<\infty\Big)}{\mathbb{E}e(0,nx,\omega,^{t_0}F)}\notag\\ 
&= -\ln\frac{ E_{0}\Big([\mathbb{E}\exp(-U_{t_0}(0))]^{H(nx)}\mathbb{E}[\exp (-\sum_{m=0}^{H(nx)-1}W_{t_0}(S_m))], H(nx)<\infty\Big)}{\mathbb{E}e(0,nx,\omega,^{t_0}F)}\notag\\ 
&=-\ln \mathbb{\hat{E}}_{0,^{t_0}F}^{nx}(\mathbb{E}(\exp(-U_{t_0}(0)))^{H(nx)})\leq (-\ln \mathbb{E}(\exp(-U_{t_0}(0))))\mathbb{\hat{E}}_{0,^{t_0}F}^{nx}(H(nx)).
\end{align}
For the first inequality of (\ref{equation 18}), we remark that for all $z\in\mathbb{Z}^d$: 
$$\mathbb{E}(\exp(-\ell_z(H(nx)U_{t_0}(z))))\geq [\mathbb{E}\exp(-U_{t_0}(z))]^{\ell_z(H(nx))}.$$ 
This inequality is obvious if $\ell_z(H(nx))=0 $, and follows from Jensen's inequality if \mbox{$\ell_z(H(nx))\geq 1$}.\\
From (\ref{equab 11}) (in the proof of part (ii) of theorem \ref{theorem 1.1}):
\begin{equation}\label{equation 20}
\mathbb{\hat{E}}_{0,^{t_0}F}^{nx}(H(nx))\leq \frac{D}{-\ln \mathbb{E}\exp(-W_{t_0}(0))}b(0,nx,^{t_0}F).
\end{equation}
From (\ref{equation 17}), (\ref{equation 18}) and (\ref{equation 20}), for all $x\in \mathbb{Z}^d$:
\begin{align}\label{equation 19}
\frac{b(0,nx,^{t_0}F*^{t_0}\hat{F})}{n}&\leq (-\ln \mathbb{E}(\exp(-U_{t_0}(0))))\frac{D}{-\ln \mathbb{E}\exp(-W_{t_0}(0))}\frac{b(0,nx,^{t_0}F)}{n} \notag\\
&\hspace{5cm}+ \frac{b(0,nx,^{t_0}F)}{n}\notag\\ 
\beta_{^{t_0}F*^{t_0}\hat{F}}(x) &\leq (-\ln \mathbb{E}(\exp(-U_{t_0}(0))))\frac{D}{-\ln \mathbb{E}\exp(-W_{t_0}(0))} \beta_{^{t_0}F}(x)+\beta_{^{t_0}F}(x)
\end{align}
Note that $\lim_{t_0 \to \infty}(-\ln \mathbb{E}(\exp(-U_{t_0}(0))))=0$  and $\lim_{t_0 \to \infty}-\ln \mathbb{E}\exp(-W_{t_0}(0))=-\ln \mathbb{E}\exp(-V_1(0)$ $ =const$. 
It is clear that $^{t_0}F\geq F \geq ^{t_0}F*^{t_0}\hat{F}$. Then, $\beta_{^{t_0}F}(x)\leq \beta_{F}(x)\leq \beta_{^{t_0}F*^{t_0}\hat{F}}(x)$. Combine this with (\ref{equation 19}):
\begin{align}
\limsup_{t_0 \to \infty}\beta_{^{t_0}F}(x)\leq \beta_{F}(x)&\leq \lim_{t_0 \to \infty}(-\ln \mathbb{E}(\exp(-U_{t_0}(0))))\frac{D}{-\ln \mathbb{E}\exp(-W_{t_0}(0))}\beta_{F}(x)+\liminf_{t_0 \to \infty}\beta_{^{t_0}F}(x) \notag\\
&\leq \liminf_{t_0 \to \infty}\beta_{^{t_0}F}(x).  
\end{align}
We get finally theorem \ref{theoremb 3}.
\end{proof}
\begin{remark}\label{remb 4}
In the case of $d=2$, beside the hypothesis of Theorem \ref{theoremb 3}, we further assume that there exists $\lambda >0$ such that $F(\lambda)=0$, then Theorem \ref{theoremb 3} also holds. The argument we use here is as the proof of the case $d\geq 3$ with remark that:
$$ \mathbb{\hat{E}}_{0,^{t_0}F}^{nx}(H(nx))\leq \frac{1}{1-\exp(-\lambda)}\cdot \frac{1}{-\ln \mathbb{E}\exp(-W_{t_0}(0))}b(0,nx,^{t_0}F), $$
\end{remark}
We are now ready to prove Theorem \ref{theoremb 1}.\\
\textbf{Proof of theorem \ref{theoremb 1}.}

Case $d=1$. It is similar to the case quenched Lyapunov exponent.

Case $d \geq 3$. First, remark that by the same arguments we used to prove Theorem \ref{theorem 1.3}, we also have that: let $(F_n)$ is a sequence of distribution functions such that there is a distribution function $G$ with finite mean such that $G \leq F_n$ for all $n$. If $F_n\stackrel{w}{\to}F$, then $\lim_{n\to\infty}\beta_{F_n}(x)=\beta_{F}(x) $ for all $x\in\mathbb{Z}^d$.\\
Now, fix $t_0>0$ which is a continuity point of $F$. Define a distribution function $G$:
\begin{equation}
G(t):=
\begin{cases}
0 \mbox{ if $t<t_0$}\\
1 \mbox{ if $t\geq t_0$},
\end{cases}
\end{equation}
Clearly $\int t dG(t)=t_0$ and $G(t)\leq {^{t_0}F_n(t)}$ for all $n$. Using the result above to have $\lim_{n \to \infty}\beta_{^{t_0}F_n}=\beta_{^{t_0}F}$. Furthermore $\liminf_{n\to\infty}\beta_{F_n}\geq \liminf_{n \to \infty}\beta_{^{t_0}F_n}=\beta_{^{t_0}F}$ since  $^{t_0}F_n\ge F_n$ for all $n$. Now let $t_0 \to \infty$ through continuity points of $F$, and apply Theorem \ref{theoremb 3}, $\liminf_{n\to\infty}\beta_{F_n}\geq \lim_{t_0 \to \infty}\beta_{^{t_0}F}=\beta_{F}$. Combine this with $\limsup_{n\to \infty}\beta_{F_n}\leq \beta_{F}$ given by Theorem \ref{theoremb 2}  to obtain finally Theorem \ref{theoremb 1}.

Case $d=2$. If we further suppose that there exists $\lambda>0$ such that $F_n(\lambda)=0$ for all $n$, we can use Remark \ref{remb 4} and follow the arguments as the case $d\geq 3$ to obtain Theorem \ref{theoremb 1}.

\begin{acknowledgements}
I would like to thank my Ph.D. advisor Daniel Boivin for many helpful discussions and suggestions about this work. This research was supported by the French ANR project MEMEMO2, 2010 BLAN 0125 04.
\end{acknowledgements} 
\bibliographystyle{plain}
\bibliography{filebib}

\end{document}